\date{\today}
\newtheorem{thm}[subsection]{Theorem}
\newtheorem{prop}[subsection]{Proposition}
\newtheorem{cor}[subsection]{Corollary}
\newtheorem{lemma}[subsection]{Lemma}
\theoremstyle{definition}
\newtheorem{definition}[subsection]{Definition}
\newtheorem{example}[subsection]{Example}
\newtheorem{remark}[subsection]{Remark}
\numberwithin{equation}{section}
\newcommand{\powser}[1]{[\![#1]\!]}
\newcommand{\G}{\mathbb{G}}
\newcommand{\F}{\mathbb{F}}
\newcommand{\Sect}{\mathcal{O}}
\newcommand{\cL}{\mathcal{L}}
\newcommand{\Q}{\mathbb{Q}}
\newcommand{\Z}{\mathbb{Z}}
\newcommand{\QZ}[1]{(\Q_p/\Z_p)^{#1}}
\newcommand{\al}{\alpha}
\newcommand{\lra}[1]{\overset{#1}{\longrightarrow}}
\newcommand{\Prod}[1]{\underset{#1}{\prod}}
\newcommand{\Coprod}[1]{\underset{#1}{\coprod}}
\newcommand{\Loops}{\mathcal{L}}
\def \mmod{/\mkern-3mu /}
\DeclareMathOperator{\Aut}{Aut}
\DeclareMathOperator{\im}{im}
\DeclareMathOperator{\Hom}{Hom}
\DeclareMathOperator{\colim}{colim}
\DeclareMathOperator{\Spf}{Spf}
\DeclareMathOperator{\Sub}{Sub}
\DeclareMathOperator{\Level}{Level}
\DeclareMathOperator{\Tr}{Tr}
\DeclareMathOperator{\GL}{GL}
\DeclareMathOperator{\Cl}{Cl}
\DeclareMathOperator{\tors}{free}
\DeclareMathOperator{\Map}{Map}
\DeclareMathOperator{\rk}{rk}
\DeclareMathOperator{\lat}{\mathbb{L}}
\DeclareMathOperator{\cF}{\mathcal{F}}
\DeclareMathOperator{\qz}{\mathbb{T}}
\newcommand{\upperRomannumeral}[1]{\uppercase\expandafter{\romannumeral#1}}
\begin{document}
\title[Level structures and Morava $E$-theory]{Level structures on $p$-divisible groups from the Morava $E$-theory of abelian groups}

\author{Zhen Huan}
\author{Nathaniel Stapleton}

\begin{abstract}    
The close relationship between the scheme of level structures on the universal deformation of a formal group and the Morava $E$-cohomology of finite abelian groups has played an important role in the study of power operations for Morava $E$-theory. The goal of this paper is to explore the relationship between level structures on the $p$-divisible group given by the trivial extension of the universal deformation by a constant $p$-divisible group and the Morava $E$-cohomology of the iterated free loop space of the classifying space of a finite abelian group.
\end{abstract}

\maketitle


\section{Introduction}

Power operations for Morava $E$-theory have been studied for more than three decades. Due to the close connection between Morava $E$-theory and the arithmetic geometry of the universal deformation $\G$, the best strategy for understanding these operations has been to provide an algebro-geometric description of these maps whenever possible. In order to aid this endeavor, Strickland proved that, after taking the quotient by a transfer ideal, the $E$-cohomology of the symmetric group is the ring of functions on the scheme that represents subgroup schemes of a particular order in the universal deformation formal group \cite{etheorysym}. In \cite{ahs}, Ando, Hopkins, and Strickland proved that the additive power operation
\[
P_m/I_{Tr} \colon E^0 \rightarrow E^0(B\Sigma_m)/I_{Tr}
\]
is the ring of functions on the map of formal schemes that takes a deformation with a choice of subgroup of order $m$ to the quotient deformation - a canonical deformation with formal group given by the quotient. The target of the power operation can be simplified by making use of the fact that there is an injection
\[
E^0(B\Sigma_m)/I_{Tr} \hookrightarrow \Prod{A \subset \Sigma_m \text{transitive}} E^0(BA)/I_{Tr},
\]
where $I_{Tr} \subset E^0(BA)$ is another transfer ideal. The $E^0$-algebra $E^0(BA)/I_{Tr}$ is very closely related to the scheme of $A^*$-level structures on $\G$. This scheme was introduced by Drinfeld in \cite[Section 4]{drinfeldell1} and plays an important role in arithmetic geometry. These $E^0$-algebras are complicated but somewhat accessible and quite well-behaved.

Any kind of explicit calculation of these power operations above height $1$ turns out to be quite difficult. Explicit calculations were initiated by Rezk in \cite{rezkpowercalc} and have been successfully continued by Zhu in \cite{zhupo}. Certain variants of these power operations are more geometric and thus more amenable to calculation. In particular, work of Ganter \cite{ganterpowerops} and the first author \cite{Huan1, Huan2} describe and study the power operations on ``Tate $K$-theory" and Quasi-elliptic cohomology, respectively, completely geometrically. These cohomology theories are both closely related to height $2$ Morava $E$-theory. In fact, the character maps of the second author \cite{tgcm, ttcm} and of the second author with Barthel \cite{bscentralizers} can be used to approximate Morava $E$-theory by a variety of types of extensions of $p$-adic $K$-theory by the free loop space functor that are essentially coarse versions of Tate $K$-theory. As more variants of ``cohomology of the free loop space" arise in nature (eg. \cite{berwickevans2015twisted, Bismut, ganterpowerops, han2007supersymmetric, Huan2, noohi2019twisted, ttcm} and especially \cite{lurE}), it is worth gaining a better understanding of the target of the power operation in the case of the simplest extensions of Morava $E$-theory.

In this paper, we restrict our attention to the extension of Morava $E$-theory of the form $E^0(\Loops^h(-))$. That is, we just compose $E$-cohomology with the $h$-fold free loop space functor $\Loops^h(-)$. Although this ``cohomology theory" does not seem to inherit power operations from Morava $E$-theory, the extensions of $E$-theory that are expected to have power operations will map to it. Thus the first $E^0$-algebra that one might want to study in this context is $E^0(\Loops^hB\Sigma_m)/I_{Tr}$. This was accomplished in \cite{SSST}, which provides an algebro-geometric description in terms of subgroup schemes of order $m$ in the $p$-divisible group $\G \oplus (\Q_p/\Z_p)^h$, generalizing Strickland's fundamental result in \cite{etheorysym}. As level structures are more often easier to work with than subgroups, and also more amenable to calculation, in this paper we study $E^0(\Loops^{h}BA)/I_{Tr}$ for $A$ finite abelian. We prove a variety of results analogous to more classical results regarding $E^0(BA)/I_{Tr}$. Among other things, we describe the product decomposition of $E^0(\Loops^{h}BA)/I_{Tr}$ by making use of certain families of subgroups of $A$, give an algebro-geometric description of these $E^0$-algebras in terms of level structures on $\G \oplus (\Q_p/\Z_p)^h$, and describe the relation to $E^0(\Loops^hB\Sigma_m)/I_{Tr}$.

To state the main result more precisely, we need some setup. Assume that $A$ is a finite abelian $p$-group, let $\lat' = \Z_{p}^h$, and let $f \colon \lat' \to A$ be a continuous map of abelian groups. Let $\cF_f$ be the minimal family of subgroups of $A$ containing
\[
\{H \subset A | H \text{ is proper and } \im(f) \subseteq H\}.
\]
Associated to $\cF_f$ is the transfer ideal $I_{\cF_f} \subseteq E^0(BA)$, which is generated by the image of the transfers along the subgroups $H \in \cF_f$. We produce a factorization of $E^0$-algebras
\[
E^0(\Loops^{h}BA)/I_{Tr} \cong \Prod{f \in \hom(\lat',A)} E^0(BA)/I_{\cF_f}.
\]

The Pontryagin dual of $\lat'$ is $\qz' \cong (\Q_p/\Z_p)^h$ and the Pontryagin dual of $f$ is
\[
f^* \colon A^* \to \qz'.
\]
Using the map $f^*$, we define a functor from complete local $E^0$-algebras to sets called 
\[
\Level_{f^*}(A^*,\G \oplus \qz') \colon \text{complete local $E^0$-algebras} \to \text{Set}. 
\]
For a complete local $E^0$-algebra $R$, we define $\Level_{f^*}(A^*,\G \oplus \qz')(R)$ to be the set of homomorphisms $A^* \to \G(R) \oplus \qz'$ satisfying two properties: 
\begin{itemize}
\item the composite $ A^* \to \G(R) \oplus \qz' \to \qz'$ is equal to $f^*$, and 
\item the induced map $\ker(f^*) \to \G(R)$ is a level structure. 
\end{itemize}

By construction, there is a pullback of formal schemes
\begin{equation}
\xymatrix{\Level_{f^*}(A^*,\G \oplus \qz') \ar[r] \ar[d] & \Level(\ker(f^*), \G) \ar[d] \\ \Hom(A^*, \G) \ar[r] & \Hom(\ker(f^*), \G),}
\label{levelfdecomp}\end{equation}
where the left vertical map is induced by the projection $\G \oplus \qz' \to \G$.

\begin{thm} \label{thmiso} There is a canonical isomorphism of $E^0$-algebras
\[
(E^0(BA)/I_{\cF_f})^{\tors} \cong \mathcal{O}_{\Level_{f^*}(A^*, \G\oplus \qz')},
\]
where $\mathcal{O}_{\Level_{f^*}(A^*, \G \oplus \qz')}$ denotes the ring of functions on $\Level_{f^*}(A^*, \G \oplus \qz')$ and $(E^0(BA)/I_{\cF_f})^{\tors}$ denotes the torsion-free part of the ring $E^0(BA)/I_{\cF_f}$.
\end{thm}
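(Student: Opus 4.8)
The plan is to reduce the statement to the classical identification of $\Level$-schemes with torsion-free transfer quotients, applied to the \emph{quotient} group $C := A/\im(f)$, and then to feed this into the pullback square \eqref{levelfdecomp}. Write $\pi \colon A \to C$ for the projection. Under Pontryagin duality $\pi$ corresponds to the inclusion $\ker(f^*) = C^* \hookrightarrow A^*$, so the bottom arrow $\Hom(A^*,\G) \to \Hom(\ker(f^*),\G)$ of \eqref{levelfdecomp} is exactly the map induced by $\pi^* \colon E^0(BC) \to E^0(BA)$ under the standard identifications $E^0(BA) \cong \mathcal{O}_{\Hom(A^*,\G)}$ and $E^0(BC) \cong \mathcal{O}_{\Hom(C^*,\G)}$. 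I would first show that $I_{\cF_f}$ is the extension along $\pi^*$ of the full transfer ideal of $C$, and then transport this across the square.

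The key lemma is the exact ideal identity $I_{\cF_f} = \pi^*(I_{Tr}(C)) \cdot E^0(BA)$, where $I_{Tr}(C) \subseteq E^0(BC)$ is the transfer ideal of the family of all proper subgroups of $C$. First I would record the combinatorial description $\cF_f = \{K \subseteq A \mid K + \im(f) \neq A\}$, immediate from the definition of the minimal family. By transitivity of transfers, for $K \in \cF_f$ the transfer $\Tr_K^A$ factors through $\Tr_H^A$ with $H := \pi^{-1}(\pi(K))$ a proper subgroup containing $\im(f)$, so $I_{\cF_f}$ is generated by the ideals $\im(\Tr_H^A)$ as $H = \pi^{-1}(C')$ ranges over preimages of proper $C' \subsetneq C$. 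The two inputs that make the identity exact are: (i) restriction $\mathrm{res}^A_H \colon E^0(BA) \to E^0(BH)$ is surjective, since $\Hom(H^*,\G) \hookrightarrow \Hom(A^*,\G)$ is the closed immersion cutting out the homomorphisms vanishing on $(A/H)^*$; combined with Frobenius reciprocity this forces $\im(\Tr_H^A) = \Tr_H^A(1)\cdot E^0(BA)$ to be principal; and (ii) the base-change formula for transfers along the homotopy pullback square $BH \simeq BA \times_{BC} BC'$ gives $\Tr_H^A(1) = \pi^*(\Tr_{C'}^C(1))$. Running (i) and (ii) over the maximal subgroups $C'$ of $C$ yields both inclusions, hence the identity.

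With the lemma in hand I would assemble the isomorphism. The fibration $B\im(f) \to BA \to BC$ makes $E^0(BA)$ a finite free module over $E^0(BC)$ via $\pi^*$ (equivalently $\Hom(A^*,\G) \to \Hom(\ker(f^*),\G)$ is finite flat), so base change along $\pi^*$ is exact and commutes with passage to the torsion-free quotient. Hence
\[
(E^0(BA)/I_{\cF_f})^{\tors} \cong \big(E^0(BA)\otimes_{E^0(BC)} E^0(BC)/I_{Tr}(C)\big)^{\tors} \cong E^0(BA)\otimes_{E^0(BC)} (E^0(BC)/I_{Tr}(C))^{\tors}.
\]
Now the classical theorem of Strickland and Ando--Hopkins--Strickland \cite{etheorysym, ahs} gives $(E^0(BC)/I_{Tr}(C))^{\tors} \cong \mathcal{O}_{\Level(C^*,\G)} = \mathcal{O}_{\Level(\ker(f^*),\G)}$, and the flatness just established makes \eqref{levelfdecomp} a Tor-independent pullback, so its ring of functions is $\mathcal{O}_{\Hom(A^*,\G)} \otimes_{\mathcal{O}_{\Hom(\ker(f^*),\G)}} \mathcal{O}_{\Level(\ker(f^*),\G)}$, which by definition is $\mathcal{O}_{\Level_{f^*}(A^*, \G \oplus \qz')}$. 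Matching the two tensor products yields the canonical isomorphism of the theorem.

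The main obstacle is the key lemma: pinning down $I_{\cF_f}$ as the base change of the full transfer ideal of $A/\im(f)$. Conceptually this is the assertion that $\cF_f$ consists exactly of the subgroups failing to surject onto $A/\im(f)$, so that its transfers see only the quotient; the technical heart is marshalling the three standard-but-essential facts — surjectivity of restriction (hence principality of each $\im(\Tr_H^A)$), the transfer base-change formula, and the finite freeness of $E^0(BA)$ over $E^0(BC)$ — and checking the identity is \emph{exact}, so that forming $(-)^{\tors}$ introduces no torsion beyond that of the classical case.
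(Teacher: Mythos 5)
Your proposal is correct, but it takes a genuinely different route from the paper's. The paper proves this theorem (in Section \ref{fiber}) by taking fibers of the global loop-space isomorphism $\Level(A^*,\G\oplus\qz') \cong \Spf\bigl((E^0(\Loops^h BA)/I_A)^{\tors}\bigr)$ of Theorem \ref{thmiso}, which was itself established via Hopkins--Kuhn--Ravenel character theory; the whole content of the paper's proof is the identification of the factor of the transfer ideal $I_A \subset E^0(\Loops^h BA)$ on the component indexed by $f$ with $I_{\cF_f}$, after which the statement is literally the fiber over $f^* \in \Hom(A^*,\qz')$. You instead work entirely at the level of $BA$: your key lemma $I_{\cF_f} = \pi^*\bigl(I_{Tr}(A/\im f)\bigr)\cdot E^0(BA)$ is correct, and its three ingredients all check out --- the description $\cF_f = \{K \subseteq A \mid K + \im f \neq A\}$ shows the maximal members are exactly the $\pi^{-1}(C')$ for $C' \subsetneq A/\im f$ maximal (this is the paper's Lemma \ref{basics}); restriction along an inclusion of finite abelian groups is surjective in $E$-cohomology, so Frobenius reciprocity makes each transfer ideal principal on $\Tr_{H,A}(1)$; and the homotopy pullback $B\pi^{-1}(C') \simeq BA \times_{B(A/\im f)} BC'$, which the paper itself establishes in Section \ref{decotran}, gives $\Tr_{\pi^{-1}C',A}(1) = \pi^*\,\Tr_{C',A/\im f}(1)$ by naturality of the transfer. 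Base-changing the classical isomorphism \eqref{ahsiso} for $C = A/\im f$ along the finite free map $\pi^*$ and matching against the fiber-product description \eqref{levelfdecomp} then closes the argument. Your route is close in spirit to the alternative the paper sketches in the remark immediately after the theorem, but strictly cleaner: that sketch routes through Proposition \ref{fdecomp} and hence a non-canonical splitting $A \cong M \times K$ with side hypotheses, whereas your ideal identity is canonical and in fact sharpens Corollary \ref{quotient} from the existence of a map to an equality of extended ideals. The trade-off is that the paper's fiberwise proof yields for free the compatibility with the loop-space theorem and with the projection to $\Hom(A^*,\qz')$, while yours is self-contained modulo \cite{ahs}. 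Two cosmetic repairs: ``Tor-independence'' is not needed, since \eqref{levelfdecomp} is a pullback of affine formal schemes and so its ring of functions is the (completed) tensor product by definition --- freeness is only needed so that $(-)^{\tors}$ commutes with the base change, which your free-module computation handles; and the finite freeness of $E^0(BA)$ over $E^0(B(A/\im f))$ should be cited as the standard fact rather than deduced from the fibration $B\im(f) \to BA \to B(A/\im f)$ (it is the same fact the paper invokes for $\mathrm{Res}^*$ in Section \ref{level}).
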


This result is proved by first using Hopkins--Kuhn--Ravenel character theory \cite{hkr} to produce a canonical isomorphism of $E^0$-algebras
\[
(E^0(\Loops^hBA)/I_{Tr})^{\tors} \cong \mathcal{O}_{\Level(A^*, \G\oplus \qz')},
\]
where $I_{Tr}$ is the ideal generated by transfers along proper subgroups of $A$. We then analyze the fibers of the map $\Level(A^*, \G\oplus \qz') \to \Hom(A^*,\qz')$ given by post-composition with the projection $\G\oplus \qz' \to \qz'$.

The scheme $\Level_{f^*}(A^*,\G\oplus \qz')$ admits a simple, though non-canonical, factorization. Given a decomposition $A^* \cong M \oplus K$, where $M$ is a minimal summand of $A^*$ containing $\ker(f^*)$, there is an isomorphism
\[
\Level_{f^*}(A,\G \oplus \qz') \cong \Level(M, \G)\times \Hom(K, \G).
\]
We believe that this should be viewed as a positive feature of the formal scheme $\Level_{f^*}(A,\G \oplus \qz')$ that is in contrast with the scheme $\Sub_{p^k}^{A}(\G\oplus \qz')$ studied in \cite{SSST}. The closest thing to a similar decomposition of $\Sub_{p^k}^{A}(\G\oplus \qz')$ is given in Proposition 6.5 of \cite{SSST}.

In fact, we compare $\Level_{f^*}(A^*,\G\oplus \qz')$ to the scheme $\Sub_{p^k}^{\im(f^*)}(\G \oplus \qz')$. There is a canonical map
\[
\im \colon \Level_{f^*}(A^*, \G \oplus \qz') \to \Sub_{p^k}^{\im(f^*)}(\G \oplus \qz')
\]
defined by sending a level structure $l \colon A^* \hookrightarrow \G \oplus \qz'$ such that the composite $A^* \hookrightarrow \G \oplus \qz' \to \qz'$ is equal to $f^*$ to the subgroup scheme $\im l$, which is a subgroup of $\G \oplus \qz'$ that projects onto $\im(f^*) \subset \qz'$.

Further, if $i \colon A \hookrightarrow \Sigma_{p^k}$ is the Cayley embedding (so that $|A|=p^k$) and $|A/\im f| = p^j$, then there is a topologically induced map
\[
E^0(B(\im i f \wr \Sigma_{p^j}))/I_{Tr}^{[if]} \to E^0(BA)/I_{\cF_f}
\]
making the following diagram of formal schemes commute
\[
\xymatrix{\Spf((E^0(BA)/I_{\cF_f})^{\tors}) \ar[r] \ar[d]_{\cong} & \Spf(E^0(B(\im i f \wr \Sigma_{p^j}))/I_{Tr}^{[if]}) \ar[d]^-{\cong} \\ \Level_{f^*}(A^*, \G \oplus \qz') \ar[r] & \Sub_{p^k}^{\im(f^*)}(\G \oplus \qz'),}
\]
where the right vertical map is the isomorphism of \cite[Proposition 7.12]{SSST}.

\vspace{.25in}
\paragraph{\emph{Acknowledgments}} We thank Charles Rezk for getting both of us interested in this subject. We also thank Jeremy Hahn for useful conversations. We thank Tobias Barthel, Bert Guillou, and Tomer Schlank for helpful comments on the paper. The first author is supported by the Young Scientists Fund of the National Natural Science Foundation of China (Grant No. 11901591). The second author is supported by NSF grant DMS-1906236.

\section{Recollections and extensions}\label{recol}

We recall the relationship between the $E$-cohomology of finite abelian groups and level structures on the universal deformation of a height $n$ formal group over a perfect field. Further, we note that a result of Hopkins, Kuhn, and Ravenel regarding localizations of the $E$-cohomology of groups of the form $(\Z/p^k)^n$ extend to finite abelian $p$-groups $A$ such that $A/pA$ has $p$-rank $n$.

Let $\G$ be the universal deformation of a height $n$ formal group over a perfect field of characteristic $p$. Let $E$ be the associated Morava $E$-theory. As in \cite{handbook}, we will view $\G$ as a functor from the category of complete local $E^0$-algebras to the category of abelian groups.

Let $A$ be a finite abelian $p$-group such that $A = A[p^k]$, where $A[p^k]$ is the $p^k$-torsion in $A$. The formal scheme of maps $\Hom(A,\G)$ sends a complete local $E^0$-algebra $R$ to the set of maps of commutative group schemes from $A$ to $\G[p^k]$ over $\Spf(R)$. This is isomorphic to the set of maps of abelian groups from $A$ to $\G(R)$.

Building on the canonical isomorphism $E^0(BS^1) \cong \Sect_{\G}$, Hopkins, Kuhn, and Ravenel discovered the following fundamental relationship between formal groups and Morava $E$-theory:

\begin{prop} \cite[Proposition 5.12]{hkr} Let $A$ be a finite abelian group. There is a canonical isomorphism of $E^0$-algebras $$E^0(BA)\cong \Sect_{\Hom(A^*, \G)}$$
natural in maps of finite abelian groups.
\label{HKR512}\end{prop}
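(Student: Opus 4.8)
Following Hopkins--Kuhn--Ravenel, the plan is to build the comparison map directly out of the defining isomorphism $E^0(BS^1)\cong\Sect_\G$ and then verify it is an isomorphism by reducing to cyclic $p$-groups. For the construction: since $A$ is finite, a character $\chi\in A^{*}=\Hom(A,S^1)$ induces $B\chi\colon BA\to BS^1$ and hence a map of $E^0$-algebras $\Sect_\G\cong E^0(BS^1)\xrightarrow{(B\chi)^{*}}E^0(BA)$, i.e.\ a map of formal schemes $\Spf E^0(BA)\to\G$. Because $BS^1$ is a commutative group object and $B(\chi_1\chi_2)$ is the sum of $B\chi_1$ and $B\chi_2$ under that structure, the resulting family of maps $\Spf E^0(BA)\to\G$ indexed by $\chi\in A^{*}$ is additive in $\chi$; this is precisely the data of a map of formal schemes $\Spf E^0(BA)\to\Hom(A^{*},\G)$, equivalently a map of $E^0$-algebras $\Sect_{\Hom(A^{*},\G)}\to E^0(BA)$. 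Naturality in maps of finite abelian groups $\phi$ is immediate from the construction, since $B(\chi\circ\phi)=B\chi\circ B\phi$.

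To see this map is an isomorphism, I would first dispose of the prime-to-$p$ part: writing $A=A_{(p)}\times A'$ with $|A'|$ prime to $p$, the order $|A'|$ is invertible in $E^0$, so a transfer argument gives $E^0(BA')\cong E^0$, which matches $\Hom((A')^{*},\G)=\Spf E^0$ since $|A'|$ acts invertibly on $\G$. Next, both functors convert finite products of groups into completed tensor products over $E^0$: on the topological side this is the K\"unneth isomorphism, and on the geometric side it is the fact that $\Hom((-)^{*},\G)$ sends products to fibre products of formal schemes over $\Spf E^0$ while $\Sect$ sends those to $\hotimes_{E^0}$. It therefore suffices to treat $A=\Z/p^{k}$. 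There, the multiplication-by-$p^{k}$ self-map of $S^1$ exhibits $B\Z/p^{k}$ as a circle bundle over $BS^1$ with Euler class $[p^{k}]_\G(x)$, and the associated Gysin sequence collapses (since $E^*(BS^1)$ is concentrated in even degrees and cup product with the Euler class is injective) to give $E^0(B\Z/p^{k})\cong E^0(BS^1)/([p^{k}]_\G(x))\cong\Sect_\G/([p^{k}]_\G(x))$; by Weierstrass preparation this is free of rank $p^{nk}$ over $E^0$, and it is precisely the ring of functions on $\G[p^{k}]=\Hom(\Z/p^{k},\G)=\Hom((\Z/p^{k})^{*},\G)$. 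Unwinding the identifications, the relevant character is the tautological inclusion $\Z/p^{k}\hookrightarrow S^1$, which is exactly the map appearing in the Gysin sequence, so the abstract map constructed above is this isomorphism.

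The main point requiring care is the interaction of the last two steps: the K\"unneth isomorphism is only available once one knows $E^0(BA)$ is a free $E^0$-module of finite rank, and this finiteness is itself obtained by bootstrapping from the cyclic computation and propagating it along products. So the argument is really an induction on the number of cyclic factors of $A_{(p)}$ in which, at each stage, the freeness just established licenses the next application of K\"unneth; the independence of the resulting isomorphism from the chosen product decomposition of $A$ is then subsumed by the naturality already built into the construction.
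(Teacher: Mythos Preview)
Your argument is correct and is essentially the Hopkins--Kuhn--Ravenel proof that the paper invokes; the paper does not supply an independent proof of this proposition but simply cites \cite[Proposition 5.12]{hkr}, and the discussion it places after the statement (the formula $E^0(B\Z/p^k)\cong E^0\powser{x}/[p^k](x)$ via Weierstrass preparation, extended to products by K\"unneth) matches your cyclic-plus-K\"unneth reduction exactly.
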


Applying $\Spf(-)$, we have a canonical isomorphism of formal schemes $\Hom(A^*, \G) \cong \Spf(E^0(BA))$. Thus, given a complete local ring $R$, there is a canonical isomorphism of sets between $\Hom(A^*, \G)(R) = \Hom_{\text{Ab}}(A^*, \G(R))$ and $\Hom_{\text{cts } E^0\text{-alg}}(E^0(BA),R)$.

Given a coordinate $E^0(BS^1) \cong E^0\powser{x}$, we have $E^0(B\Z/p^k) \cong E^0\powser{x}/[p^k](x)$, where $[p^k](x)$ is the $p^k$-series for the formal group law associated to the coordinate. Further, since the Weierstrass preparation theorem implies that $E^0(B\Z/p^k)$ is a finitely generated free $E^0$-module, for $A \cong \prod\limits_{i=1}^{j} \Z/p^{k_i}$, we have
\[
E^0(BA) \cong E^0\powser{x_1, \ldots, x_j}/([p^{k_1}](x), \ldots, [p^{k_j}](x)).
\]

A map of abelian groups $l \colon A \to \G(R)$ is a level structure if the divisor $\im(l[p]) = \Spf(R\powser{x}/(\prod_{a \in A[p]} (x - l(a)))$ is a subgroup scheme of $\Spf(R) \times_{\Spf(E^0)} \G[p]$. This implies that $\im(l) = \Spf(R\powser{x}/(\prod_{a \in A} (x - l(a)))$ is a subgroup scheme of $\Spf(R) \times_{\Spf(E^0)} \G$. The formal scheme of level structures $\Level(A, \G)$ sends a complete local $E^0$-algebra $R$ to the set of level structures from $A$ to $\G(R)$.

Let $I_A \subset E^0(BA)$ be the ideal generated by the image of the transfer maps from proper subgroups of $A$.  Given a commutative ring $R$, let $R^{\tors} = \im(R \to \Q \otimes R)$. In \cite[Section 7]{ahs}, Ando, Hopkins, and Strickland produce a canonical isomorphism
\begin{equation} \label{ahsiso}
(E^0(BA)/I_A)^{\tors} \cong \Sect_{\Level(A^*, \G)}.
\end{equation}
We discuss this isomorphism more in Section \ref{main}.

Let $S_A \subset E^0(BA)$ be the set of Euler classes of nontrivial line bundles on $BA$ (ie. line bundles induced by nontrivial irreducible representations of $A$). In \cite[Lemma 6.12]{hkr}, it is noted that the canonical map $E^0(BA) \to S_{A}^{-1}E^0(BA)$ factors through $E^0(BA)/I_A$. This follows from Frobenius reciprocity together with the fact that, for each proper subgroup $A' \subset A$, there is a nontrivial representation $A \to S^1$ with $A'$ contained in the kernel. 

In \cite[Proposition 6.5]{hkr}, Hopkins, Kuhn, and Ravenel show that, when $A \cong (\Z/p^k)^n$, the canonical map $E^0(BA)/I_A \to S_{A}^{-1}E^0(BA)$ induces an isomorphism
\begin{equation} \label{hkrisolevel}
\Q \otimes E^0(BA)/I_A \cong S_{A}^{-1}E^0(BA).
\end{equation}
We will make use of a mild extension of this result. We begin with a lemma. Given a map of finite abelian groups $\rho \colon A \to H$, we will write $\rho^*(S_H) \subset E^0(BA)$ for the image of $S_H \subset E^0(BH)$ in $E^0(BA)$.

\begin{lemma} \label{lemthefirst}
Let $A$ be a finite abelian $p$-group such that $A/pA \cong (\Z/p)^n$. Let $\rho \colon A \to A/pA$ be the quotient map. Inverting the set $S_A \subset E^0(BA)$ is equivalent to inverting $\rho^*(S_{A/pA}) \subset E^0(BA)$.
\end{lemma}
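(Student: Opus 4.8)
The plan is to prove that the two localizations $S_A^{-1}E^0(BA)$ and $\rho^*(S_{A/pA})^{-1}E^0(BA)$ coincide as localizations of $E^0(BA)$. Since every nontrivial character of $A/pA$ pulls back along the surjection $\rho$ to a nontrivial character of $A$, we have $\rho^*(S_{A/pA}) \subseteq S_A$; hence $S_A^{-1}E^0(BA)$ is a further localization of $\rho^*(S_{A/pA})^{-1}E^0(BA)$, and it is enough to show that every element of $S_A$ is already a unit in $\rho^*(S_{A/pA})^{-1}E^0(BA)$.

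First I would fix a coordinate $x$ on $\G$, so that the Euler class of the line bundle induced by a character $\chi \colon A \to S^1$ is $\chi^*(x) \in E^0(BA)$, and the Euler class of the line bundle induced by $\chi^k$ is $[k](\chi^*(x))$, where $[k]$ is the $k$-series of the associated formal group law. The key preliminary observation is that $\rho^*(S_{A/pA})$ is precisely the set of Euler classes of the nontrivial characters of $A$ of order dividing $p$: such a character has image in the $p$-th roots of unity, hence kills $pA$ and factors through $\rho$, and conversely every nontrivial character of $A/pA$ has order exactly $p$. Now let $\chi$ be an arbitrary nontrivial character of $A$, necessarily of order $p^m$ for some $m \geq 1$, and set $\psi = \chi^{p^{m-1}}$, a nontrivial character of order $p$. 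Then $\psi^*(x) \in \rho^*(S_{A/pA})$, so $\psi^*(x)$ is a unit in $\rho^*(S_{A/pA})^{-1}E^0(BA)$. On the other hand $\psi^*(x) = [p^{m-1}](\chi^*(x))$, and the power series $[p^{m-1}](t) \in E^0\powser{t}$ has vanishing constant term, hence is divisible by $t$; writing $[p^{m-1}](t) = t \cdot g(t)$ yields $\psi^*(x) = \chi^*(x)\cdot g(\chi^*(x))$ in $E^0(BA)$. Since a product that becomes invertible in a localization has each factor invertible there, $\chi^*(x)$ is a unit in $\rho^*(S_{A/pA})^{-1}E^0(BA)$. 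As $\chi$ was arbitrary, every element of $S_A$ is a unit there, and the two localizations agree.

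This argument is elementary and I do not expect a serious obstacle. The only points that require a little care are the identification of $\rho^*(S_{A/pA})$ with the Euler classes of the order-$p$ characters of $A$, and the divisibility $t \mid [p^{m-1}](t)$ in $E^0\powser{t}$, which follows by induction on $m$ since every iterate of the formal group law applied to $t$ is again a power series with vanishing constant term.
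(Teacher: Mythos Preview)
Your proof is correct and follows essentially the same route as the paper: both observe $\rho^*(S_{A/pA}) \subseteq S_A$, then for an arbitrary Euler class $s = e(\chi) \in S_A$ pick a $p$-power $j$ so that $[p^j](s) = e(\chi^{p^j}) \in \rho^*(S_{A/pA})$, and conclude from $s \mid [p^j](s)$ that $s$ becomes a unit. Your version is simply more explicit about the choice of $j$ (namely $j = m-1$ where $p^m$ is the order of $\chi$) and about why the divisibility $t \mid [p^{m-1}](t)$ holds.
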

\begin{proof}
By construction $\rho^*(S_{A/pA}) \subseteq S_A$. We will show that inverting $\rho^*(S_{A/pA})$ inverts $S_A$. Let $s = e(\cL) \in S_A$. Since $[p^j](s) = e(\cL^{\otimes p^j})$ is the Euler class associated to the $p^j$th tensor power of the line bundle $\cL$, there exists a $j \geq 0$ such that $[p^j](s) \in \rho^*(S_{A/pA})$. Since $s|[p^j](s)$, $s$ is a unit if $[p^j](s)$ is a unit.
\end{proof}

Let $T_A =\im(E^0(BA) \to S_{A}^{-1}E^0(BA))$.

\begin{prop} \label{hkrresult2}
Let $A$ be a finite abelian $p$-group such that $A/pA \cong (\Z/p)^n$. There is a canonical isomorphism
\[
\Q \otimes T_A \cong S_{A}^{-1}E^0(BA).
\]
\end{prop}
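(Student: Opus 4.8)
The plan is to reduce to the Hopkins--Kuhn--Ravenel isomorphism \eqref{hkrisolevel}, applied to the elementary abelian quotient $\bar{A}:=A/pA\cong(\Z/p)^n$, using \Cref{lemthefirst} to move between the two Euler class localizations. Write $\rho\colon A\twoheadrightarrow\bar{A}$ for the quotient map. First I would record that $\rho^*\colon E^0(B\bar{A})\to E^0(BA)$ makes $E^0(BA)$ a finite free $E^0(B\bar{A})$-module: applying $\Spf$, this map is the restriction map $\Hom(A^*,\G)\to\Hom(\bar{A}^*,\G)$ along the finite-index inclusion $\bar{A}^*=A^*[p]\hookrightarrow A^*$, which (writing $A^*\cong\prod_i\Z/p^{k_i}$) is a product of the finite flat maps $\G[p^{k_i}]\xrightarrow{[p^{k_i-1}]}\G[p]$. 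Hence by \Cref{lemthefirst}, $S_A^{-1}E^0(BA)=\rho^*(S_{\bar{A}})^{-1}E^0(BA)\cong S_{\bar{A}}^{-1}E^0(B\bar{A})\otimes_{E^0(B\bar{A})}E^0(BA)$ is finite over $S_{\bar{A}}^{-1}E^0(B\bar{A})$, and the latter is a $\Q$-algebra by \eqref{hkrisolevel} for $\bar{A}$; therefore $S_A^{-1}E^0(BA)$ is a $\Q$-algebra. Since $T_A\hookrightarrow S_A^{-1}E^0(BA)$ and $S_A^{-1}T_A=S_A^{-1}E^0(BA)$, the natural map $\Q\otimes T_A\to S_A^{-1}E^0(BA)$ is injective (flatness of $\Q$, rationality of the target), and it is an isomorphism exactly when every $s\in S_A$ becomes a unit in $\Q\otimes T_A$. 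So the whole proof is reduced to proving that last statement.

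Next I would extract from HKR's theorem that $S_{\bar{A}}$ is already invertible in $\Q\otimes T_{\bar{A}}$. The canonical map of \eqref{hkrisolevel} for $\bar{A}$ factors as $E^0(B\bar{A})/I_{\bar{A}}\twoheadrightarrow T_{\bar{A}}\hookrightarrow S_{\bar{A}}^{-1}E^0(B\bar{A})$; tensoring with $\Q$, the composite is the isomorphism of \eqref{hkrisolevel} while the second arrow stays injective, so $\Q\otimes T_{\bar{A}}\xrightarrow{\ \sim\ }S_{\bar{A}}^{-1}E^0(B\bar{A})$. In particular every element of $S_{\bar{A}}$ maps to a unit of $\Q\otimes T_{\bar{A}}$.

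Then I would transport this along $\rho^*$. Because $\rho^*(S_{\bar{A}})\subseteq S_A$, the square of localization maps attached to $\rho^*$ commutes, so $\rho^*$ carries $T_{\bar{A}}$ into $T_A$ and induces a ring homomorphism $\Q\otimes T_{\bar{A}}\to\Q\otimes T_A$. Now fix $s=e(\cL)\in S_A$, the Euler class of a nontrivial line bundle. By the argument in the proof of \Cref{lemthefirst} there is $j\geq 0$ with $[p^j](s)\in\rho^*(S_{\bar{A}})$, say $[p^j](s)=\rho^*(s')$ with $s'\in S_{\bar{A}}$, and moreover $s$ divides $[p^j](s)$ in $E^0(BA)$ because $x$ divides $[p^j](x)$ in $E^0\powser{x}$. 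Since $s'$ is a unit in $\Q\otimes T_{\bar{A}}$, its image $[p^j](s)$ is a unit in $\Q\otimes T_A$, and hence so is $s$. Together with the first paragraph this completes the proof.

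The step I expect to be the main obstacle is the finite flatness used in the first paragraph --- that $\Hom(A^*,\G)$ is finite flat (indeed finite free) over $\Hom(\bar{A}^*,\G)$ --- which is standard for the formal group of a finite-height formal group but should be pinned down or cited carefully; an alternative is to run the same argument with the surjection $(\Z/p^k)^n\twoheadrightarrow A$ for $p^kA=0$ in place of $\rho$. The remaining steps are bookkeeping around \Cref{lemthefirst} and \eqref{hkrisolevel}.
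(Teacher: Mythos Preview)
Your argument is correct, but it is organized differently from the paper's. The paper does not invoke \eqref{hkrisolevel} as a black box; instead it re-runs the HKR argument directly inside $T_A$. Concretely, the paper observes that the $p^n-1$ elements of $\rho^*(S_{\bar A})\subset T_A$ are roots of $\langle p\rangle(x)=[p](x)/x=p+\cdots$, so that $\prod_{s\in\rho^*(S_{\bar A})}(x-s)$ agrees with $\langle p\rangle(x)$ up to a unit in $T_A\powser{x}$; evaluating at $x=0$ shows that inverting $\rho^*(S_{\bar A})$ in $T_A$ is the same as inverting $p$, and then \Cref{lemthefirst} identifies this with inverting $S_A$. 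Thus the paper obtains $\Q\otimes T_A=S_A^{-1}T_A=S_A^{-1}E^0(BA)$ in one stroke, without ever appealing to the $\bar A$ case of \eqref{hkrisolevel}.

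Your route trades that direct root-counting for a bootstrap along $\rho^*$ from the known $\bar A$ case. This is perfectly valid and arguably more modular: it isolates exactly what is being used from HKR, and the transport via $\rho^*$ and the divisibility $s\mid[p^j](s)$ are clean. One remark on what you flagged as the ``main obstacle'': the finite freeness of $E^0(BA)$ over $E^0(B\bar A)$ is correct as you argue, but you do not actually need it. The only use you make of it is to conclude that $S_A^{-1}E^0(BA)$ is a $\Q$-algebra, and this already follows from your other ingredients: by \eqref{hkrisolevel} the ring $S_{\bar A}^{-1}E^0(B\bar A)$ is a $\Q$-algebra, and the ring map $\rho^*$ together with \Cref{lemthefirst} lands it in $S_A^{-1}E^0(BA)$, so $p$ is already a unit there. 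With that simplification your proof becomes essentially a repackaging of the same two facts the paper uses, just assembled in the opposite order.
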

\begin{proof}
This follows the proof of Proposition 6.5 in \cite{hkr}. The set $\rho^*(S_{A/pA}) \subset T_A$ provides a collection of distinct roots of $\langle p \rangle (x) = [p](x)/x = p+\ldots$. Thus
\[
\prod_{s \in \rho^*(S_{A/pA})} (x-s)
\]
differs from $\langle p \rangle (x)$ by multiplication by a unit in $T_A\powser{x}$. Thus inverting $p$ is equivalent to inverting $\rho^*(S_{A/pA})$ in $T_A$ and, by Lemma \ref{lemthefirst}, this is equivalent to inverting $S_A$.
\end{proof}

\begin{cor} \label{rationaliso1}
Let $A$ be a finite abelian $p$-group such that $A/pA \cong (\Z/p)^n$. There is a canonical isomorphism
\[
\Q \otimes E^0(BA)/I_A \cong S_{A}^{-1}E^0(BA).
\]
\end{cor}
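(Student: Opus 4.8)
The plan is to deduce this from Proposition \ref{hkrresult2} together with the Ando--Hopkins--Strickland isomorphism \eqref{ahsiso}. First I would recall that the isomorphism \eqref{ahsiso} identifies $(E^0(BA)/I_A)^{\tors}$ with $\Sect_{\Level(A^*, \G)}$, and that the inclusion $\Level(A^*,\G) \hookrightarrow \Hom(A^*,\G)$ of formal schemes is an open immersion whose complement is the locus where some nontrivial $A^*$-point fails to be a level structure; concretely, after choosing a coordinate, inverting the Euler classes $S_A \subset E^0(BA)$ corresponds exactly to passing to this open subscheme. Thus $S_A^{-1}E^0(BA)$ is the ring of functions on $\Level(A^*,\G)$ localized away from... actually, more directly: I would first establish that the natural map $(E^0(BA)/I_A)^{\tors} \to S_A^{-1}E^0(BA)$ becomes an isomorphism after inverting $p$.

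The key steps, in order. (1) By Proposition \ref{hkrresult2}, $\Q \otimes T_A \cong S_A^{-1}E^0(BA)$, where $T_A = \im(E^0(BA) \to S_A^{-1}E^0(BA))$. (2) Since the canonical map $E^0(BA) \to S_A^{-1}E^0(BA)$ factors through $E^0(BA)/I_A$ (by \cite[Lemma 6.12]{hkr}, recalled in the excerpt), the ring $T_A$ is a quotient of $E^0(BA)/I_A$; moreover $T_A$ is torsion-free as a subring of $S_A^{-1}E^0(BA)$, so the surjection $E^0(BA)/I_A \twoheadrightarrow T_A$ factors through $(E^0(BA)/I_A)^{\tors} \twoheadrightarrow T_A$. (3) Rationalizing, $\Q \otimes (E^0(BA)/I_A)^{\tors} \twoheadrightarrow \Q \otimes T_A \cong S_A^{-1}E^0(BA)$. (4) It remains to see this last surjection is injective. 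For this I would use that both sides have the same rank: $(E^0(BA)/I_A)^{\tors} \cong \Sect_{\Level(A^*,\G)}$ is a free $E^0$-module whose rank equals the number of $E^0$-points of $\Level(A^*,\G)$ over an algebraically closed field, and $S_A^{-1}E^0(BA)$ is the ring of functions on the open subscheme of $\Hom(A^*,\G)$ on which every nontrivial point is ``nonzero in the appropriate sense", whose geometric fiber over $\Q \otimes E^0$ is precisely the set of injective homomorphisms $A^* \hookrightarrow \G$ --- and over a rational (hence characteristic $0$) point, an injective homomorphism from a finite abelian group is automatically a level structure, so these two point-sets coincide. Hence the surjection in (3) is an isomorphism, giving $\Q \otimes E^0(BA)/I_A \cong \Q \otimes (E^0(BA)/I_A)^{\tors} \cong S_A^{-1}E^0(BA)$, where the first isomorphism is because $\Q \otimes (-)$ kills torsion.

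The main obstacle I expect is step (4): carefully matching up the ranks, i.e.\ showing that after inverting $p$ the scheme of level structures $\Level(A^*,\G)$ and the ``generically injective locus'' cut out by inverting $S_A$ in $\Hom(A^*,\G)$ agree. This is where one genuinely uses $A/pA \cong (\Z/p)^n$: Lemma \ref{lemthefirst} reduces inverting $S_A$ to inverting the Euler classes coming from $A/pA$, which pick out $p-1$ of the roots of $\langle p\rangle(x)$ per ``coordinate direction'', exactly as in the proof of \cite[Proposition 6.5]{hkr}; one must check this forces every $p$-torsion subgroup-point of $A^*$ to land in a level-structure configuration. An alternative, cleaner route for step (4) is to observe that $(E^0(BA)/I_A)^{\tors} \to S_A^{-1}E^0(BA)$ is a map of torsion-free $E^0$-modules that is an isomorphism after inverting $p$ and is surjective, and to note that the source, being $\Sect_{\Level(A^*,\G)}$, already receives the universal map to $S_A^{-1}E^0(BA)$ compatibly; but I think the rank-count via geometric points is the most transparent, so that is the approach I would carry out in detail.
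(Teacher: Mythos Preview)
Your preliminary remarks contain an error: the inclusion $\Level(A^*,\G) \hookrightarrow \Hom(A^*,\G)$ is a \emph{closed} immersion, not an open one, so it is not literally the locus obtained by inverting $S_A$. You correctly abandon this line and the actual argument does not depend on it, but the sentence should be removed.

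Steps (1)--(3) are correct and coincide with the paper's setup. The divergence is in step (4). Your rank-count does work: over an algebraically closed field of characteristic $0$ the $p$-divisible group is \'etale, so level structures coincide with injective homomorphisms, and the geometric fibres of $\Spf\Sect_{\Level(A^*,\G)}$ and of the open locus cut out by inverting $S_A$ have the same cardinality; since the source $\Q\otimes\Sect_{\Level(A^*,\G)}$ is free over $\Q\otimes E^0$, a surjection to a module of the same generic rank is an isomorphism.

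The paper's argument for (4) is much shorter and avoids geometric points entirely. One simply notes that $\Sect_{\Level(A^*,\G)}$ is a \emph{domain} (it is a regular local ring by Drinfeld/Strickland). The map $(E^0(BA)/I_A)^{\tors}\to S_A^{-1}E^0(BA)$ is the localization of this domain at the images of the elements of $S_A$, which are nonzero (else $S_A^{-1}E^0(BA)=0$, contradicting Proposition~\ref{hkrresult2}); localization of a domain at nonzero elements is injective, so $(E^0(BA)/I_A)^{\tors}\cong T_A$ already before rationalizing, and the corollary follows from Proposition~\ref{hkrresult2}. This replaces your fibre-counting by a one-line observation, and as a bonus gives the integral identification $T_A\cong(E^0(BA)/I_A)^{\tors}$ that the paper records right after the proof.
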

\begin{proof}
Since $\Q \otimes E^0(BA)/I_A \cong \Q \otimes \Sect_{\Level(A^*,\G)}$ and $\Sect_{\Level(A^*,\G)}$ is a domain, we have
\[
\Q \otimes E^0(BA)/I_A \cong \Q \otimes T_A \cong \Q \otimes \Sect_{\Level(A^*,\G)}.
\]
\end{proof}

Thus we see that $T_A \cong (E^0(BA)/I_A)^{\tors}$.

\section{Level structures}\label{level}

The goal of this section is to describe and study level structures on $p$-divisible groups of the form $\G \oplus \qz'$ over $\Spf(R)$, where $R$ is complete local, $\G$ is the $p$-divisible group associated to a height $n$ formal group, and $\qz ' \cong (\Q_p/\Z_p)^h$ is a height $h$ constant $p$-divisible group. Level structures on $p$-divisible groups were studied in quite a bit of generality in \cite[Chapter 3]{HarrisTaylor}. A good reference for level structures on formal groups is \cite{subgroups}.

Let $A$ be a finite abelian $p$-group and assume that $|A| = p^k$. Let $\pi \colon \G \oplus \qz' \to \qz'$ be the projection.

\begin{definition} \label{leveldef}
An $A$-level structure $l \colon A \hookrightarrow \G \oplus \qz'$ is a homomorphism of group schemes $l \colon A \to (\G \oplus \qz')[p^k]$ such that the induced map $\ker(\pi l) \rightarrow \G$ is a $\ker(\pi l)$-level structure on $\G$.
\end{definition}

There are canonical maps
\[
\Hom(A, \G \oplus \qz') \to \Hom(A,\qz')
\] 
and
\[
\Level(A,\G \oplus \qz') \to \Hom(A,\qz')
\]
given by post-composing with the projection $\pi \colon \G \oplus \qz' \to \qz'$. Fix a map $g \colon A \to \qz'$. Define
\[
\Hom_g(A,\G \oplus \qz')
\]
to be the scheme of homomorphisms $h \colon A \to \G \oplus \qz'$ such that $\pi h = g$. By construction, there is a canonical isomorphism
\[
\Hom_g(A,\G \oplus \qz') \cong \Hom(A,\G).
\]

Similarly, we define $\Level_g(A,\G\oplus \qz')$:

\begin{definition}
Given $g \colon A \to \qz'$, define
\[
\Level_g(A,\G\oplus \qz')
\]
to be the functor from complete local $R$-algebras to sets sending a complete local ring $S$ to the set of $A$-level structures on the pullback $\Spf(S) \times_{\Spf(R)} (\G\oplus \qz')$ such that post-composing with $\pi$ gives $g$.
\end{definition}

Since $\Hom(A, \qz')$ is a set, we may decompose $\Level(A,\G \oplus \qz')$ using the map $\Level(A,\G \oplus \qz') \to \Hom(A,\qz')$.

\begin{prop}
There is a decomposition of formal schemes
\[
\Level(A,\G \oplus \qz') = \Coprod{g \in \hom(\lat,A)} \Level_g(A,\G\oplus \qz').
\]
For some values of $g$, $\Level_g(A,\G\oplus \qz')$ may be the empty scheme.
\end{prop}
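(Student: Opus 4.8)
The plan is to realize the claimed decomposition inside the ambient formal scheme $\Hom(A,\G\oplus\qz')$, where the splitting is visible because the constant $p$-divisible group contributes only a discrete factor. The first step is to record that $\Hom(A,\G\oplus\qz')\cong\Hom(A,\G)\times\Hom(A,\qz')$ and that $\Hom(A,\qz')$ is the constant formal scheme on the \emph{finite} set $\hom(A,\qz')$: this set is finite because $A$ is a finite abelian $p$-group, and it does not vary with the base because $\qz'$ is the constant $p$-divisible group, so $\qz'(S)=(\Q_p/\Z_p)^{h}$ for every complete local ring $S$ (the spectrum of a local ring being connected). Consequently the map $\Hom(A,\G\oplus\qz')\to\Hom(A,\qz')$ induced by $\pi$ exhibits $\Hom(A,\G\oplus\qz')$ as the disjoint union, over $g\in\hom(A,\qz')$, of its fibers $\Hom_g(A,\G\oplus\qz')\cong\Hom(A,\G)$.

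Next I would identify $\Level_g(A,\G\oplus\qz')$ with a closed subscheme of $\Hom_g(A,\G\oplus\qz')\cong\Hom(A,\G)$. By Definition~\ref{leveldef}, a homomorphism $l\colon A\to(\G\oplus\qz')(S)$ with $\pi l=g$ is an $A$-level structure precisely when the induced map $\ker(g)\to\G$ is a $\ker(g)$-level structure on $\G$; and for $a\in\ker(g)$ this induced map is just the $\G$-component of $l$ restricted to $\ker(g)$. So under the isomorphism above, $\Level_g(A,\G\oplus\qz')$ is the pullback of the immersion $\Level(\ker(g),\G)\hookrightarrow\Hom(\ker(g),\G)$ along the restriction map $\Hom(A,\G)\to\Hom(\ker(g),\G)$. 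Since that immersion is closed by \eqref{ahsiso}, each $\Level_g(A,\G\oplus\qz')$ is an affine formal scheme. Now an $A$-level structure $l$ is, by definition, nothing other than an $S$-point of $\Hom_{\pi l}(A,\G\oplus\qz')$ lying in $\Level_{\pi l}(A,\G\oplus\qz')$, so we obtain a decomposition of functors
\[
\Level(A,\G\oplus\qz')=\Coprod{g\in\hom(A,\qz')}\Level_g(A,\G\oplus\qz'),
\]
natural in $S$. Because the index set $\hom(A,\qz')$ is finite, this disjoint union of affine formal schemes is again affine (its ring of functions is the product of the rings of functions of the $\Level_g(A,\G\oplus\qz')$), so the decomposition holds in the category of formal schemes.

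Finally, $\Level_g(A,\G\oplus\qz')$ is empty over every base exactly when $\Level(\ker(g),\G)=\emptyset$, which occurs whenever $\ker(g)/p\ker(g)$ has $p$-rank greater than $n$, since a level structure on a finite abelian $p$-group $B$ restricts to an injection $B[p]\hookrightarrow\G[p]$ and $|\G[p]|=p^{n}$. For instance, with $A\cong(\Z/p)^{n+1}$ and $g=0$ one has $\ker(g)=A$ and $\Level_g(A,\G\oplus\qz')=\emptyset$. I do not expect a serious obstacle here; the one point requiring care is the passage from a decomposition of functors to one of formal schemes, and that is exactly where the finiteness of $\hom(A,\qz')$ and the representability of each $\Level_g(A,\G\oplus\qz')$ — via its closed immersion into $\Hom(A,\G)$ — are used.
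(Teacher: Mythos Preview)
Your proof is correct and follows the same approach as the paper: the paper's argument is the single sentence preceding the proposition, ``Since $\Hom(A,\qz')$ is a set, we may decompose $\Level(A,\G\oplus\qz')$ using the map $\Level(A,\G\oplus\qz')\to\Hom(A,\qz')$,'' and you have fleshed out exactly this idea, including the pullback description of each fiber (which the paper records immediately afterward as \eqref{levelfdef}) and an explicit criterion for emptiness.
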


From the definition, we see that $\Level_g(A,\G \oplus \qz')$ is the pullback of formal schemes
\begin{equation}
\xymatrix{\Level_g(A,\G\oplus \qz') \ar[r] \ar[d] & \Level(\ker(g), \G) \ar[d]^{\iota} \\ \Hom(A, \G) \ar[r]^-{\mathrm{Res}} & \Hom(\ker(g), \G),}\label{levelfdef}
\end{equation}
Where the left vertical map is induced by the projection $\G \oplus \qz' \to \G$. This description makes $\Level_g(A,\G \oplus \qz')$ very accessible; it can be understood in terms of classical objects.

\begin{example}
Two extreme cases follow from the pullback above: If $g \colon A \to \qz'$ is injective, then $\iota$ is an isomorphism and we have
\[
\Level_g(A,\G\oplus \qz') \cong \Hom(A, \G).
\]
If $g \colon A \to \qz'$ is the zero map, then $\mathrm{Res}$ is an isomorphism and we have
\[
\Level_g(A,\G\oplus \qz') \cong \Level(A, \G).
\]
\end{example}

\begin{remark} \label{remarkinj}
Note that, as a special case of Definition \ref{leveldef}, $\Level(A, \qz')$ is the set of injective group homomorphisms from $A$ to $\qz'$.
\end{remark}

\begin{prop}
The functor $\Level_g(A,\G \oplus \qz')$ is corepresentable by an $R$-algebra that is finitely generated and free as an $R$-module and is a closed subscheme of $\Hom_g(A,\G \oplus \qz')$.
\end{prop}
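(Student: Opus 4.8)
The plan is to reduce the statement to the already-established representability of $\Hom(A,\G)$ and $\Level(M,\G)$ together with the pullback square \eqref{levelfdef}. Recall that $\Hom(A,\G)$ is corepresented by $E^0(BA^*)$ (via \cref{HKR512}), which is finitely generated and free as an $E^0$-module by the Weierstrass preparation argument recalled above, and hence $\Hom_g(A,\G \oplus \qz') \cong \Hom(A,\G)$ has the same property after base change to $R$. Likewise, $\Level(\ker(g),\G)$ is a closed subscheme of $\Hom(\ker(g),\G)$ cut out by the single equation expressing that $\prod_{a \in \ker(g)[p]}(x - l(a))$ divides $\langle p\rangle(x)$ up to a unit (equivalently, the condition that $\im(l[p])$ be a subgroup scheme), and its ring of functions is a finitely generated free module over $E^0(B\ker(g)^*)$ — this is part of the classical theory recalled around \eqref{ahsiso}.

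First I would observe that the square \eqref{levelfdef} exhibits $\Level_g(A,\G\oplus\qz')$ as the fiber product $\Hom(A,\G) \times_{\Hom(\ker(g),\G)} \Level(\ker(g),\G)$ over $\Spf(R)$. Since fiber products of affine (formal) schemes correspond to tensor products of the corepresenting rings, the corepresenting object is
\[
\Sect_{\Hom(A,\G)} \otimes_{\Sect_{\Hom(\ker(g),\G)}} \Sect_{\Level(\ker(g),\G)},
\]
all rings taken over $R$. Next I would check the module-finiteness and freeness: $\Sect_{\Level(\ker(g),\G)}$ is finite free over $\Sect_{\Hom(\ker(g),\G)}$, and base change along the restriction map $\Sect_{\Hom(\ker(g),\G)} \to \Sect_{\Hom(A,\G)}$ preserves finite freeness, so the tensor product is finite free over $\Sect_{\Hom(A,\G)}$; composing with the fact that $\Sect_{\Hom(A,\G)}$ is finite free over $R$ gives finite freeness over $R$.

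For the closed-immersion claim, I would note that the right vertical map $\iota \colon \Level(\ker(g),\G) \hookrightarrow \Hom(\ker(g),\G)$ is a closed immersion (its source is a closed subscheme of its target, as recalled above), and closed immersions are stable under base change; pulling back $\iota$ along $\mathrm{Res} \colon \Hom(A,\G) \to \Hom(\ker(g),\G)$ yields exactly the left vertical map of \eqref{levelfdef}, namely $\Level_g(A,\G\oplus\qz') \hookrightarrow \Hom(A,\G) \cong \Hom_g(A,\G\oplus\qz')$, which is therefore a closed immersion. The main point requiring care — really the only nonformal input — is the finiteness and freeness of $\Sect_{\Level(\ker(g),\G)}$ over $\Sect_{\Hom(\ker(g),\G)}$, which I would either cite from \cite{subgroups}/\cite{ahs} or re-derive: the divisibility condition defining level structures is given by a bounded collection of polynomial equations in coordinates on the (affine, finite free) scheme $\Hom(\ker(g),\G)$, and Weierstrass preparation again shows the quotient is finite free. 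Everything else is the formal behavior of pullbacks of affine schemes under base change.
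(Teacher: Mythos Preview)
Your overall strategy---use the pullback square \eqref{levelfdef}, corepresent by the tensor product, and deduce the closed immersion by stability under base change---is exactly the paper's approach. However, your freeness argument contains a genuine error.

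You assert that $\Sect_{\Level(\ker(g),\G)}$ is finitely generated and free over $\Sect_{\Hom(\ker(g),\G)}$. This is false whenever $\ker(g)$ is nontrivial: since $\Level(\ker(g),\G) \hookrightarrow \Hom(\ker(g),\G)$ is a closed immersion, the map on rings of functions is a \emph{surjection}, so $\Sect_{\Level(\ker(g),\G)}$ is a cyclic $\Sect_{\Hom(\ker(g),\G)}$-module. A proper quotient of a ring is never free over that ring. (Your closing remark about Weierstrass preparation would at best show freeness over the base $R$, not over $\Sect_{\Hom(\ker(g),\G)}$; these got conflated.)

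The paper's argument runs the freeness in the other direction. One uses that $\Sect_{\Hom(A,\G)}$ is finite free over $\Sect_{\Hom(\ker(g),\G)}$ (choose a splitting $A \cong \ker(g) \oplus A/\ker(g)$ to see this), and that $\Sect_{\Level(\ker(g),\G)}$ is finite free over $R$ (the classical fact from \cite{subgroups}). Base-changing the finite free module $\Sect_{\Hom(A,\G)}$ along the quotient $\Sect_{\Hom(\ker(g),\G)} \twoheadrightarrow \Sect_{\Level(\ker(g),\G)}$ shows the tensor product is finite free over $\Sect_{\Level(\ker(g),\G)}$, hence over $R$. Your closed-immersion paragraph is fine as written.
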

\begin{proof}
We will make use of the pullback of \eqref{levelfdef}. The map 
\[
\Sect_{\Hom(\ker(g), \G)} \to \Sect_{\Level(\ker(g), \G)}
\]
is surjective and $\Sect_{\Level(\ker(g), \G)}$ is a finitely generated free $R$-module. Further, $\Sect_{\Hom(A, \G)}$ is a finitely generated free module over $\Sect_{\Hom(\ker(g), \G)}$, via the canonical inclusion $\mathrm{Res}^*$. Thus 
\[
\Sect_{\Level_g(A,\G\oplus \qz')} \cong \Sect_{\Hom(A, \G)} \otimes_{\Sect_{\Hom(\ker(g), \G)}} \Sect_{\Level(\ker(g), \G)}
\]
is a finitely generated free $R$-module.

Since $\Level(\ker(g), \G)$ is a closed subscheme of $\Hom(\ker(g), \G)$, we have $\Level_g(A,\G\oplus \qz')$ is a closed subscheme of $\Hom(A, \G) \cong \Hom_g(A, \G \oplus \qz')$. The composite
\[
\Level_g(A,\G\oplus \qz') \to \Hom(A, \G) \cong \Hom_g(A, \G \oplus \qz')
\]
is the canonical inclusion.
\end{proof}

Recall that $l \colon A \to \G$ is a level structure if and only if $l[p] \colon A[p] \to \G$ is a level structure. From the definition above, we see that if $l \colon A \hookrightarrow \G \oplus \qz'$ is a level structure and $M \subseteq A$ is a minimal summand of $A$ containing $\ker(\pi l)$, then the induced map $M \to \G$ is an $M$-level structure on $\G$. This may be compared with Part 4 of Lemma 3.1 in Harris-Taylor \cite{HarrisTaylor}. Note that they are working with full rank level structures.

The next lemma shows that $\Level_g(A,\G \oplus \qz')$ admits a simple non-canonical description. This should be viewed as a positive aspect of the scheme $\Level_g(A,\G \oplus \qz')$ that is in contrast with the scheme $\Sub_{p^k}^{A}(\G\oplus \qz')$ from \cite{SSST}. The closest thing to a decomposition of $\Sub_{p^k}^{A}(\G\oplus \qz')$ is given in Proposition 6.5 of \cite{SSST}.

\begin{lemma}
Assume that $A \cong M \oplus K$ and that $M$ is a minimal summand of $A$ containing $\ker(g)$. There is an isomorphism
\[
\Level_g(A,\G \oplus \qz') \cong \Level(M, \G)\times \Hom(K, \G)
\]
depending on the choice of decomposition of $A$.
\end{lemma}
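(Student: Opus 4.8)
The plan is to use the pullback square \eqref{levelfdef} defining $\Level_g(A,\G\oplus\qz')$ and reduce the computation of the bottom row to something trivial by exploiting the hypothesis that $M$ is a summand. Write the chosen decomposition as $A \cong M \oplus K$. Since $\ker(g) \subseteq M$, the restriction map $g|_K \colon K \to \qz'$ is injective: indeed if $0 \neq x \in K$ had $g(x) = 0$ then $x \in \ker(g) \subseteq M$, contradicting $M \cap K = 0$. (Here I am using $\qz' \cong (\Q_p/\Z_p)^h$ so that a subgroup map being injective is the only condition, cf. Remark~\ref{remarkinj}.) More importantly, $\ker(g) = \ker(g|_M) \oplus 0 = \ker(g|_M)$, so the inclusion $\ker(g) \hookrightarrow A$ factors through $M$.

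First I would rewrite the pullback \eqref{levelfdef}. The map $\mathrm{Res} \colon \Hom(A,\G) \to \Hom(\ker(g),\G)$ factors as
\[
\Hom(A,\G) \cong \Hom(M,\G)\times\Hom(K,\G) \overset{\pr}{\longrightarrow} \Hom(M,\G) \overset{\mathrm{Res}'}{\longrightarrow} \Hom(\ker(g),\G),
\]
where $\pr$ is projection off the $K$-factor (using $\Hom(-,\G)$ turns the direct sum $A \cong M\oplus K$ into a product of schemes) and $\mathrm{Res}'$ is restriction along $\ker(g)\hookrightarrow M$. Therefore the pullback of $\iota \colon \Level(\ker(g),\G)\to\Hom(\ker(g),\G)$ along $\mathrm{Res}$ is the same as first pulling $\iota$ back along $\mathrm{Res}'$ over $\Hom(M,\G)$, and then pulling that back along $\pr$ over $\Hom(A,\G)$. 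The first pullback is, by \eqref{levelfdef} applied with $A$ replaced by $M$, exactly $\Level_g(M,\G\oplus\qz')$; but since $M$ is a minimal summand of itself containing $\ker(g)$, the preceding discussion (the paragraph after Definition~\ref{leveldef}'s examples) shows $\Level_g(M,\G\oplus\qz')\cong \Level(M,\G)$. Concretely, the level condition on $l \colon M \hookrightarrow \G\oplus\qz'$ with $\pi l = g|_M$ is that $\ker(\pi l) = \ker(g|_M) \to \G$ is a level structure, and since $M/\ker(g|_M)$ embeds into $\qz'$ hence is cyclic-free... rather, the point is that $\ker(g|_M) \to \G$ being a level structure forces $M \to \G$ to be a level structure because $M$ is a minimal summand — this is precisely the remark comparing to Harris--Taylor Lemma 3.1(4). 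Pulling the resulting closed subscheme $\Level(M,\G)\hookrightarrow\Hom(M,\G)$ back along $\pr$ simply takes the product with $\Hom(K,\G)$, giving
\[
\Level_g(A,\G\oplus\qz') \cong \Level(M,\G)\times\Hom(K,\G).
\]

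Then I would spend a sentence checking that this scheme-level identification does land inside $\Level_g(A,\G\oplus\qz')$ rather than merely $\Hom_g(A,\G\oplus\qz')$: given a level structure $l_M \colon M\hookrightarrow\G$ and an arbitrary $l_K \colon K\to\G$, the combined map $A \cong M\oplus K \to \G\oplus\qz'$ sending $(m,k) \mapsto (l_M(m) + (\text{lift of } g(k)), g(m)+g(k))$ — here I use that $A \to \G\oplus\qz'$ is determined by its two components and that the $\G$-component of $l$ restricted to $\ker(\pi l)$ must be $l_M$ on $\ker(g|_M)$ — has $\ker(\pi l) = \ker(g)=\ker(g|_M) \subseteq M$, on which $l$ restricts to $l_M|_{\ker(g|_M)}$, which is a level structure since $l_M$ is; conversely every level structure in $\Level_g(A,\G\oplus\qz')$ arises this way once the decomposition is fixed. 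The main obstacle is getting the bookkeeping exactly right: one must be careful that ``$M$ a minimal summand containing $\ker(g)$'' is genuinely what makes $\Level_g(M,\G\oplus\qz')\cong\Level(M,\G)$ (and not just $\subseteq$), and that the two pullbacks compose correctly as schemes — but both of these are already essentially recorded in the discussion preceding the lemma, so the argument should be short. The non-canonicity is visibly the dependence on the splitting $A\cong M\oplus K$ and, within that, on the choice of $M$.
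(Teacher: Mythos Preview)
Your argument is correct and follows essentially the same route as the paper: factor the defining pullback \eqref{levelfdef} through $\Hom(M,\G)$ and then use that $\Hom(M\oplus K,\G)\cong\Hom(M,\G)\times\Hom(K,\G)$. The paper is slightly more direct at the key step: rather than passing through $\Level_{g|_M}(M,\G\oplus\qz')$ and invoking the Harris--Taylor remark, it observes that minimality of $M$ forces $M[p]=\ker(g)[p]$, so the level condition on $\ker(g)\to\G$ is literally the same as the level condition on $M\to\G$ (both being conditions on $p$-torsion), which immediately replaces $\Level(\ker(g),\G)\to\Hom(\ker(g),\G)$ by $\Level(M,\G)\to\Hom(M,\G)$ in the pullback square. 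Your final paragraph of bookkeeping is then unnecessary.
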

\begin{proof}
Since $M$ is a minimal summand of $A$ containing $\ker(g)$, we have $M[p] = \ker(g)[p]$. Thus we have a pullback square of formal schemes
\begin{equation}
\xymatrix{\Level_g(A,\G \oplus \qz') \ar[r] \ar[d] & \Level(M, \G) \ar[d] \\ \Hom(M \oplus K, \G) \ar[r]^-{\mathrm{Res}} & \Hom(M, \G).}\label{pblevelfm}
\end{equation}
The result follows from the fact that $\Hom(M \oplus K, \G) \cong \Hom(M,\G) \times \Hom(K,\G)$.
\end{proof}

\section{Transfer ideals}\label{transfer}

In this section we recall the relationship between transfers and the Hopkins-Kuhn-Ravenel character map and apply it to understand the quotient of $E^0(\Loops^{h}BA)$, when $A$ is a finite abelian $p$-group, by a certain transfer ideal. 

Since the free loop space of a finite cover is a finite cover, the functor $E^*(\Loops^h(-))$ has transfer maps for finite covers by making use of the fact that $E^*(-)$ is a cohomology theory.
For any subgroup $A'$ of $A$, $BA'\longrightarrow BA$ is equivalent to a finite cover and so there is a transfer map $$\Tr_{A', A}: E^0(\Loops^{h}BA')\longrightarrow E^0(\Loops^{h}BA).$$
Summing over all the proper subgroups of $A$, we get a map $$\bigoplus_{A' \subset A}\Tr_{A', A}: \bigoplus_{A' \subset A} E^0(\Loops^{h}BA') \longrightarrow E^0(\Loops^{h}BA).$$ Frobenius reciprocity implies that the image of this map is an ideal.

\begin{definition}
The transfer ideal $I_A\subset E^0(\Loops^{h}BA)$ is defined by
$$I_A= \im(\bigoplus_{A' \subset A} \Tr_{A', A}).$$\label{tranideal}
\end{definition}

\begin{remark}The formula in Definition \ref{tranideal} can be simplified to include only the maximal proper subgroups of $A$. Since the transfer map has the property that $$\Tr_{A', A}\circ \Tr_{A'', A'}
=\Tr_{A'', A}$$ for $A'' \subset A' \subset A$, $\im(\Tr_{A'', A})$ is contained in $\im(\Tr_{A', A})$.\end{remark}

In \cite[Theorem D]{hkr} Hopkins, Kuhn, and Ravenel describe the relationship between character theory and transfers. We recall this relationship now while introducing some notation.

Let $\lat = \Z_{p}^n$ and let $\qz = \lat^*$ so that $\qz \cong \QZ{n}$. Hopkins, Kuhn, and Ravenel introduce a $\Q \otimes E^0$-algebra $C_0$ that carries the universal isomorphism of $p$-divisible groups $\qz \to \G$, where
$\G$ is the $p$-divisible group associated to the universal deformation formal group. Explicitly, the ring $C_0$ can be taken to be
\[
C_0:= S^{-1}E^0_{cont}(B\lat),
\]
where
\[
E^0_{cont}(B\lat) = \colim_{k} E^0(B\lat/p^k\lat)
\]
and $S$ is the set of Euler classes of nontrivial irreducible representations of the abelian group $\lat/p^k\lat$ as $k$ varies.

Let $G$ be a finite group and let $\Hom(\lat,G)$ denote the set of continuous group homomorphisms from $\lat$ to $G$. Note that this set is isomorphic to the set of $n$-tuples of pairwise commuting $p$-power order elements in $G$ and that $G$ acts on this set by conjugation. Let $\Cl_n(G,C_0)$ be the ring of $C_0$-valued functions on the set of conjugacy classes in $\Hom(\lat,G)$. Hopkins, Kuhn, and Ravenel produce a character map
\[
\chi \colon E^0(BG) \lra{} \Cl_n(G,C_0)
\]
in the following way: given $[\al \colon \lat \to G]$, they map $E^0(BG)$ to $C_0$ by the composite
\[
E^0(BG) \lra{\al^*} E^0_{cont}(B\lat) \to C_0 = S^{-1}E^0_{cont}(B\lat).
\]
In Theorem C of \cite{hkr}, they prove that this map induces an isomorphism
\[
C_0 \otimes_{E^0} E^0(BG) \lra{\cong} \Cl_n(G,C_0).
\]
When $G=A$ is abelian, the conjugation action of $A$ on $\Hom(\lat,A)$ is trivial. Thus
\[
C_0 \otimes_{E^0}E^0(BA) \cong \Prod{\Hom(\lat,A)}C_0 = C_{0}^{\Hom(\lat,A)},
\]
which is the ring of $C_0$-valued functions on the set $\Hom(\lat,A)$.

Theorem D of \cite{hkr} gives a formula relating transfers in $E$-cohomology along inclusions of subgroups and the character map. Specialized to an inclusion of abelian groups $A' \subset A$, we learn that the transfer map extends a $C_0$-valued function on $\Hom(\lat,A')$ to the larger domain $\Hom(\lat,A)$ by extending by zero and multiplying by $|A/A'|$. In other words, given a function $\gamma \colon \Hom(\lat,A') \to C_0$, we have that
\begin{equation*}
\Tr_{A',A}^{C_0}(\gamma)(\al) = \left\{
\begin{array}{rl}
|A/A'|\gamma(\al) & \text{if $\al$ factors through $A'$} \\
0 & \text{otherwise}.
\end{array} \right.
\end{equation*}

Returning to the situation at hand, note that there is an equivalence $\Loops^h BA \simeq \Hom(\lat',A) \times BA$, where $\lat' = \Z_{p}^h$. Given $A' \subset A$, the induced map $\Loops^hBA' \to \Loops^hBA$ is equivalent to the cover
\[
\Hom(\lat',A') \times BA' \rightarrow \Hom(\lat',A) \times BA,
\]
where the component in the domain corresponding to $\lat' \to A'$ is sent to the component corresponding to the composite $\lat' \to A' \subset A$ in the codomain by the covering map $BA' \to BA$ induced by the inclusion $A' \subset A$.
Applying $E$-cohomology we get the factor-wise transfer map
\[
\Prod{\Hom(\lat',A')} E^0(BA') \xrightarrow{\Prod{\Hom(\lat',A')}\Tr_{A',A}^{E}} \Prod{\Hom(\lat',A')} E^0(BA)
\]
followed by the inclusion
\[
i \colon \Prod{\Hom(\lat',A')} E^0(BA) \hookrightarrow \Prod{\Hom(\lat',A)} E^0(BA).
\]
Here $\Tr_{A',A}^{E}$ is the transfer map for $E$-cohomology. 

Recall that $\qz'=(\lat')^*$.
\begin{lemma} The character map induces an isomorphism of $C_0$-algebra
\[
C_0 \otimes_{E^0} E^0(\Loops^hBA)/I_{A} \cong \Prod{\Level(A^*, \qz \oplus \qz')} C_0. \label{charlevel}
\]
\end{lemma}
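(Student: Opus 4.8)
The plan is to reduce this to a bookkeeping computation with $C_0$-valued functions on finite sets, using the Hopkins--Kuhn--Ravenel character isomorphism together with their transfer formula (Theorem D of \cite{hkr}). First I would record that $C_0$ is flat over $E^0$, being a localization of the filtered colimit $E^0_{cont}(B\lat)=\colim_k E^0(B\lat/p^k\lat)$ of finitely generated free $E^0$-modules. Consequently $C_0\otimes_{E^0}(E^0(\Loops^hBA)/I_A)\cong (C_0\otimes_{E^0}E^0(\Loops^hBA))/(C_0\otimes_{E^0}I_A)$, and $C_0\otimes_{E^0}I_A$ is the ideal generated by the images of the base-changed transfer maps $C_0\otimes_{E^0}\Tr_{A',A}$ as $A'$ ranges over the (maximal) proper subgroups of $A$. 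Next I would identify the ambient ring: from $\Loops^hBA\simeq\Hom(\lat',A)\times BA$ and the isomorphism $C_0\otimes_{E^0}E^0(BA)\cong C_0^{\Hom(\lat,A)}$ applied in each factor, one gets a $C_0$-algebra isomorphism
\[
C_0\otimes_{E^0}E^0(\Loops^hBA)\cong C_0^{\Hom(\lat',A)\times\Hom(\lat,A)}=C_0^{\Hom(\lat\oplus\lat',A)},
\]
and Pontryagin duality rewrites $\Hom(\lat\oplus\lat',A)$ as the finite set $\Hom(A^*,\qz\oplus\qz')$.

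The main step is to compute $C_0\otimes_{E^0}I_A$ inside this product ring. Here I would use the description of the cover $\Loops^hBA'\to\Loops^hBA$ as the coproduct, over $f'\in\Hom(\lat',A')$, of the covers $BA'\to BA$ whose image lies in the component indexed by $if'\in\Hom(\lat',A)$, and feed each $BA'\to BA$ piece into the transfer formula (``extend by zero and multiply by $|A/A'|$''). This should show that $C_0\otimes_{E^0}\Tr_{A',A}$ sends a function $\gamma$ on $\Hom(\lat',A')\times\Hom(\lat,A')$ to the function on $\Hom(\lat',A)\times\Hom(\lat,A)$ equal to $|A/A'|\gamma$ on $\{(f,\alpha):\im f\subseteq A',\ \im\alpha\subseteq A'\}$ and $0$ elsewhere. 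Since $|A/A'|$ is a unit in $C_0$ and $\gamma$ is arbitrary, the image of this transfer is exactly the set of functions supported on that subset, so $C_0\otimes_{E^0}I_A$ is the set of functions supported on $\bigcup_{A'\subsetneq A}\{(f,\alpha):\im f\subseteq A',\ \im\alpha\subseteq A'\}$.

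Finally I would identify the quotient. A pair $(f,\alpha)$ avoids the above union precisely when no proper subgroup of $A$ contains both $\im f$ and $\im\alpha$, i.e. when the combined homomorphism $\lat\oplus\lat'\to A$ is surjective; dualizing, this is exactly the condition that the corresponding homomorphism $A^*\to\qz\oplus\qz'$ be injective. By Remark \ref{remarkinj} (applied with the constant $p$-divisible group $\qz\oplus\qz'$ in place of $\qz'$), the set of such injective homomorphisms is $\Level(A^*,\qz\oplus\qz')$, so the quotient ring is the ring of $C_0$-valued functions on $\Level(A^*,\qz\oplus\qz')$, and the composite isomorphism is one of $C_0$-algebras.

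The hard part will be the transfer computation in the middle step: one must carefully track how the rule ``extend by zero, multiply by $|A/A'|$'' interacts with the product decomposition along $\Hom(\lat',-)$, and reduce to maximal proper subgroups so that the indexing union is finite and the supports combine cleanly. Once that is in place, the remaining identifications are routine Pontryagin duality together with the elementary fact that an ideal of a finite product $\prod_{x\in X}C_0$ generated by elements each supported on some subset of $X$ consists of exactly the functions vanishing off the union of those subsets.
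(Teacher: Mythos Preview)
Your proposal is correct and follows essentially the same approach as the paper: identify $C_0\otimes_{E^0}E^0(\Loops^hBA)$ with $C_0$-valued functions on $\Hom(\lat',A)\times\Hom(\lat,A)$, use the Hopkins--Kuhn--Ravenel transfer formula to see that the base-changed transfer from $A'\subsetneq A$ hits exactly the functions supported on pairs factoring through $A'$, and conclude that the quotient is functions on the surjective pairs, i.e.\ on $\Level(A^*,\qz\oplus\qz')$ after dualizing. Your explicit invocation of flatness of $C_0$ to pass the tensor product through the quotient is a point the paper leaves implicit, but otherwise the arguments coincide.
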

\begin{proof}
Putting together the observations above, we see that applying the functor $C_0 \otimes_{E^0} E^0(-)$ to the map $\Loops^h BA' \to \Loops^h BA$ produces the map
\[
\Prod{\Hom(\lat',A')} \Prod{\Hom(\lat,A')}C_0 \xrightarrow{i \circ (\Prod{\Hom(\lat',A')}\Tr_{A',A}^{C_0})} \Prod{\Hom(\lat',A)} \Prod{\Hom(\lat,A)}C_0.
\]
By the formula for $\Tr_{A', A}^{C_0}$, this map sends the factor corresponding to a pair of maps $(\lat' \to A', \lat \to A')$ to the factor in the codomain corresponding to $(\lat' \to A' \subset A, \lat \to A' \subset A)$ by the multiplication by $|A/A'|$ map on $C_0$ (which is a $\Q$-algebra).

Since $I_{A}$ is the ideal generated by the image of the transfer maps from all proper subgroups of $A$, we wish to understand the quotient of
\[
\Prod{\Hom(\lat',A)} \Prod{\Hom(\lat,A)}C_0
\]
by the image of these transfer maps for all proper subgroups of $A$.

Thus the quotient of the product above, which is isomorphic to $C_0 \otimes_{E^0} E^0(\Loops^hBA)/I_{A}$, consists of the factors corresponding to pairs of maps $(\lat' \to A, \lat \to A)$ such that the image of both maps do not lie in the same maximal proper subgroup. In other words, the images of the maps generate all of $A$. This is equivalent to the statement that the induced map $\lat \oplus \lat' \to A$ is surjective. Thus the Pontryagin dual $A^* \to \qz \oplus \qz'$ is injective and, by Remark \ref{remarkinj}, an element in $\Level(A^*, \qz \oplus \qz')$.
\end{proof}

\section{The isomorphism}\label{main}
In this section, we give an algebro-geometric description of $E^0(\Loops^hBA)/I_A$ (after removing possible torsion) generalizing the classical case $h=0$ described in \cite[Section 7]{ahs}.

We begin by generalizing Proposition \ref{HKR512} to the iterated free loop space of $BA$. In Section \ref{level}, we constructed a map of formal schemes $\Hom(A^*, \G \oplus \qz') \lra{} \Hom(A^*, \qz')$. Further, the components of the scheme $\Spf(E^0(\Loops^hBA))$ are in bijective correspondence with $\pi_0\Map(B\lat', BA) = \Hom(\lat', A)$. Given a component, the Pontryagin dual is a homomorphism $A^* \to \qz'$. Thus there is a canonical map $\Spf(E^0(\Loops^hBA)) \to \Hom(A^*, \qz')$.

\begin{prop} \label{hkrcor}
Let $A$ be a finite abelian group. There is a canonical isomorphism of formal schemes over $\Spf(E^0)$
\[
\xymatrix{\Hom(A^*, \G \oplus \qz') \ar[rr]^-{\cong} \ar[dr] && \Spf(E^0(\Loops^hBA)) \ar[dl]\\ & \Hom(A^*, \qz') &}
\]
compatible with the maps to $\Hom(A^*, \qz')$ described above.
\end{prop}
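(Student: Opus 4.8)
The plan is to decompose both formal schemes as finite disjoint unions indexed by $\Hom(\lat',A)$, identify the summands via \Cref{HKR512}, and then check that the two projections down to $\Hom(A^*,\qz')$ agree component by component. Since $E^0(\Loops^hBA)$ is a ring and $\Hom(A^*,\G\oplus\qz')$ is an affine formal scheme over $\Spf(E^0)$, this is ultimately a statement about $E^0$-algebras.

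First I would treat the topological side. As recalled in \Cref{transfer}, there is an equivalence $\Loops^hBA\simeq\Hom(\lat',A)\times BA$; concretely $\Loops^hBA\simeq\Map((S^1)^h,BA)\simeq\coprod_{f\in\Hom(\lat',A)}BA$, where one uses that $A$ is abelian (so free homotopy classes of maps $(S^1)^h\to BA$ are honest homomorphisms and every centralizer is $A$) and that $A$ is a finite $p$-group (so $\Hom(\Z^h,A)=\Hom(\Z_p^h,A)=\Hom(\lat',A)$). Since $E^0(-)$ is a cohomology theory and the index set is finite, applying $E^0$ gives a canonical isomorphism of $E^0$-algebras $E^0(\Loops^hBA)\cong\prod_{f\in\Hom(\lat',A)}E^0(BA)$, natural in $A$. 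Applying \Cref{HKR512} to each factor and passing to $\Spf$ then identifies $\Spf(E^0(\Loops^hBA))$ with $\coprod_{f\in\Hom(\lat',A)}\Hom(A^*,\G)$.

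Next I would treat the other side. Because $\G\oplus\qz'$ is a product of group schemes over $\Spf(E^0)$, there is a canonical isomorphism $\Hom(A^*,\G\oplus\qz')\cong\Hom(A^*,\G)\times_{\Spf(E^0)}\Hom(A^*,\qz')$. Since $\qz'$ is the constant $p$-divisible group $(\Q_p/\Z_p)^h$ and $A^*$ is finite, the functor $\Hom(A^*,\qz')$ assigns to every complete local $E^0$-algebra the fixed finite set of group homomorphisms $A^*\to(\Q_p/\Z_p)^h$, so it is the constant formal scheme on that set; hence $\Hom(A^*,\G\oplus\qz')\cong\coprod_{g\in\Hom(A^*,\qz')}\Hom(A^*,\G)$. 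Pontryagin duality, using $\qz'=(\lat')^*$ and $A^{**}=A$, supplies a canonical bijection $\Hom(A^*,\qz')\cong\Hom(\lat',A)$ under which $g$ corresponds to $f$ exactly when $g=f^*$. Combining this with the previous paragraph produces the asserted canonical isomorphism $\Hom(A^*,\G\oplus\qz')\cong\Spf(E^0(\Loops^hBA))$, and it is natural in $A$ since each ingredient is.

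Finally I would verify compatibility with the maps to $\Hom(A^*,\qz')$, which is most transparent on $R$-points for $R$ a complete local $E^0$-algebra. On the left, the map $\Hom(A^*,\G\oplus\qz')(R)\to\Hom(A^*,\qz')(R)$ is the projection onto the $\qz'$-factor, which simply records the index $g$ of the summand containing a given point; on the right, a continuous $E^0$-algebra map from $\prod_f E^0(BA)$ to the local ring $R$ factors through a unique projection, and the map $\Spf(E^0(\Loops^hBA))\to\Hom(A^*,\qz')$ recalled just before the statement sends the component indexed by $f\colon\lat'\to A$ to $f^*$. Under the bijection $f\leftrightarrow f^*$ these coincide, so the triangle commutes. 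I expect the proof to be short, and the one point demanding care is exactly this bookkeeping: confirming that the component of $\Spf(E^0(\Loops^hBA))$ coming from $f$ in the equivalence $\Loops^hBA\simeq\coprod_{\Hom(\lat',A)}BA$ is carried by the canonical map to $f^*$ itself, and not to a Pontryagin-dual twist of it. This is a matter of unwinding the identification $\pi_0\Loops^hBA=\Hom(\lat',A)$ together with the naturality of \Cref{HKR512} in $A$, but it is the place where an indexing error could plausibly slip in.
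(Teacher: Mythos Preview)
Your proposal is correct and follows essentially the same approach as the paper: both arguments rest on the decomposition $\Loops^hBA\simeq\Hom(\lat',A)\times BA$, the identification of $\Hom(A^*,\qz')$ with the finite constant scheme $\Hom(\lat',A)$ via Pontryagin duality, and \Cref{HKR512} on each component. The paper's proof is slightly terser in that it works directly on $R$-points (given $E^0(\Loops^hBA)\to R$, it reads off $t\colon\lat'\to A$ and $g\colon E^0(BA)\to R$, and then for each $A\to S^1$ writes down the pair in $\G(R)\oplus\qz'$), whereas you first make the coproduct decomposition of both sides explicit and then match summands; but this is a difference in packaging, not in substance.
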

\begin{proof}
Let $R$ be a complete local ring and assume that we are given a continuous map of $E^0$-algebras $E^0(\Loops^hBA) \to R$. Thus for each map $t \colon \lat' \to A$, we have the data of a continuous map of $E^0$-algebras $g \colon E^0(BA) \to R$. Given an element $f \colon A \to S^1$ in $A^*$, we will produce an element in $\G(R) \oplus \qz'$. Following \cite[Proposition 5.12]{hkr}, the element in $\G(R)$ is the composite
\[
E^0(BS^1) \lra{f^*} E^0(BA) \lra{g} R.
\]
The element in $\qz'$ is given by the composite
\[
\lat' \lra{t} A \lra{f} S^1.
\]
This produces the canonical map. By construction, it is compatible with the maps to $\Hom(A^*, \qz')$.
\end{proof}

As in Section \ref{recol}, for a commutative ring $R$, we will write $R^{\tors}$ for the image of $R$ in $\Q \otimes R$. Thus $R^{\tors}$ is the quotient of $R$ by the ideal of integer torsion elements in $R$. If $R$ is a finite product of Noetherian complete local rings, then $R^{\tors}$ is also a finite product of Noetherian complete local rings.

\begin{thm} \label{thmiso} There is a canonical isomorphism of formal schemes over $\Spf(E^0)$
\[
\xymatrix{\Level(A^*, \G \oplus \qz') \ar[rr]^-{\cong} \ar[dr] && \Spf((E^0(\Loops^hBA)/I_{A})^{\tors}) \ar[dl]\\ & \Hom(A^*, \qz') &}
\]
compatible with the canonical maps to $\Hom(A^*, \qz')$.

\end{thm}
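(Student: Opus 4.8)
The idea is to build the claimed isomorphism by first rationalizing everything, comparing with the character-theoretic picture established in \Cref{charlevel}, and then descending back to the integral (torsion-free) statement. Concretely, I would proceed as follows.

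First, I would observe that $\Spf((E^0(\Loops^hBA)/I_A)^{\tors})$ maps canonically to $\Hom(A^*,\qz')$: the components of $\Spf(E^0(\Loops^hBA))$ are indexed by $\Hom(\lat',A)$ by \Cref{hkrcor}, and this index set maps to $\Hom(A^*,\qz')$ by Pontryagin duality; passing to the quotient by $I_A$ only discards components, and passing to the torsion-free part does not change $\Spf$ of a finite product of complete local rings set-theoretically on components, so the map to $\Hom(A^*,\qz')$ survives. On the $\Level$ side, the map $\Level(A^*,\G\oplus\qz')\to\Hom(A^*,\qz')$ is the one described in \Cref{level} (post-composition with $\pi$). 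So both sides sit over $\Hom(A^*,\qz')$ and it suffices to produce a compatible isomorphism.

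Second, I would construct the map. By \Cref{hkrcor} there is a canonical isomorphism $\Hom(A^*,\G\oplus\qz')\cong\Spf(E^0(\Loops^hBA))$ over $\Hom(A^*,\qz')$. I claim that under this isomorphism the closed subscheme $\Level(A^*,\G\oplus\qz')\subseteq\Hom(A^*,\G\oplus\qz')$ corresponds to $\Spf((E^0(\Loops^hBA)/I_A)^{\tors})$. To see the map in one direction, recall that $E^0(\Loops^hBA)/I_A$ is, on the component indexed by $t\colon\lat'\to A$ with Pontryagin dual $g=t^*\colon A^*\to\qz'$, a quotient of $E^0(BA)$ by the transfer ideal generated by those proper subgroups $A'\subset A$ for which $t$ does \emph{not} factor through $A'$ — equivalently, by \Cref{charlevel}'s analysis, by transfers along subgroups not containing $\im(t)$. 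One then checks that the composite $E^0(BA)\twoheadrightarrow E^0(BA)/I_A|_{\text{component }t}$, followed by the HKR identification $E^0(BA)\cong\Sect_{\Hom(A^*,\G)}$ of \Cref{HKR512}, kills exactly the ideal cutting out $\Level_g(A^*,\G\oplus\qz')$ inside $\Hom_g(A^*,\G\oplus\qz')\cong\Hom(A^*,\G)$: the relevant Euler-class relations defining a level structure on $\ker(g)$ are precisely the images of the transfer relations, by the Ando--Hopkins--Strickland isomorphism \eqref{ahsiso} applied to the minimal summand $M\subseteq A^*$ containing $\ker(g)$ together with \Cref{pblevelfm} and \Cref{levelfdef}. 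This is where \Cref{lemthefirst}, \Cref{hkrresult2}, and \Cref{rationaliso1} get used: they identify the torsion-free quotient with the image in the localization $S_A^{-1}E^0(BA)$, matching the domain $\Sect_{\Level}$ which is itself a domain.

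Third, to promote this to an isomorphism I would check it rationally and then use torsion-freeness and finiteness. Rationally, $\Q\otimes(E^0(\Loops^hBA)/I_A)^{\tors}\cong\Q\otimes E^0(\Loops^hBA)/I_A$, and \Cref{charlevel} (base-changed along $\Q\otimes E^0\to C_0$, or more precisely its proof, which only used that $C_0$ is a $\Q$-algebra faithfully flat over $\Q\otimes E^0$) identifies this with the ring of functions on $\Level(A^*,\qz\oplus\qz')$ over $C_0$; matching $\qz\cong\G$ over $C_0$ via the HKR universal isomorphism gives the rational isomorphism with $\Q\otimes\Sect_{\Level(A^*,\G\oplus\qz')}$. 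Since $\Sect_{\Level(A^*,\G\oplus\qz')}$ is a finitely generated free $E^0$-module (by the corepresentability Proposition in \Cref{level}, component by component) hence torsion-free, and $(E^0(\Loops^hBA)/I_A)^{\tors}$ is torsion-free by construction, a map between them that becomes an isomorphism after $\otimes\Q$ and is known to be surjective integrally (from the quotient description above) must be an isomorphism. Compatibility with the maps to $\Hom(A^*,\qz')$ is built in at every stage.

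\textbf{The main obstacle.} The delicate point is the component-wise identification of the ideal: showing that, on the component indexed by $t\colon\lat'\to A$, the transfer ideal $I_A$ in $E^0(\Loops^hBA)$ restricts to exactly the ideal whose vanishing locus is $\Level_{t^*}(A^*,\G\oplus\qz')$ inside $\Hom(A^*,\G)$, \emph{after} removing torsion. One direction (transfers land in the level-structure ideal) follows from Frobenius reciprocity plus the observation, used in \Cref{lemthefirst}, that a subgroup $A'$ not containing $\im t$ sits inside the kernel of some nontrivial character; the reverse containment — that we have \emph{all} the level-structure relations, not a strictly smaller ideal — is exactly the content of the Ando--Hopkins--Strickland theorem \eqref{ahsiso} for the minimal summand, and getting the bookkeeping of summands, the pullback squares \eqref{levelfdef} and \eqref{pblevelfm}, and the passage to torsion-free quotients to line up correctly is the part requiring genuine care.
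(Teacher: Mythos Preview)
Your overall architecture is close to the paper's: you correctly identify that one should sandwich the torsion-free quotient between $\Sect_{\Hom(A^*,\G\oplus\qz')}$ (via \Cref{hkrcor}) and $\prod_{\Level(A^*,\qz\oplus\qz')}C_0$ (via \Cref{charlevel}), and that both $(E^0(\Loops^hBA)/I_A)^{\tors}$ and $\Sect_{\Level(A^*,\G\oplus\qz')}$ live as subquotients in this sandwich. The paper's argument is exactly this diagram chase: both middle objects receive a surjection from $\Sect_{\Hom}$ and inject into $\prod C_0$, so one produces the isomorphism (and its inverse) by lifting and pushing around. No component-by-component identification of ideals is needed at this stage; that analysis is deferred to the later fiber-wise theorem (\Cref{fmain}).

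However, your step~2 contains a genuine error. You write that on the component indexed by $t\colon\lat'\to A$, the ideal $I_A$ restricts to the ideal generated by transfers from proper $A'\subset A$ through which $t$ does \emph{not} factor. This is backwards. The cover $\Hom(\lat',A')\times BA' \to \Hom(\lat',A)\times BA$ sends the $s$-component of the source to the component indexed by the composite $\lat'\to A'\hookrightarrow A$; hence the transfer from $A'$ hits the $t$-component precisely when $t$ \emph{does} factor through $A'$, i.e.\ when $\im t\subseteq A'$. (This is exactly the family $\cF_t$ of \Cref{decotran}, and is what makes the later \Cref{fmain} work.) Your ``main obstacle'' paragraph inherits the same reversal, and the appeal to \Cref{lemthefirst} for the Frobenius-reciprocity step is misplaced: that lemma concerns which Euler classes one must invert, not which subgroups lie in kernels of characters.

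The good news is that your sandwich argument does not actually need the explicit ideal identification; it only needs that $I_A$ maps to zero in $\Sect_{\Level(A^*,\G\oplus\qz')}$, which follows immediately because $\Sect_{\Level}$ is a free $E^0$-module, hence injects into its $C_0$-base-change, where the image of $I_A$ is visibly zero by \Cref{charlevel}. The paper uses this observation together with the fact that the character map lands in the $\GL_n(\Z_p)$-invariants (so one only needs $\Q\otimes(-)$, not $C_0\otimes(-)$, on the topological side) to close the diagram. If you strip out the erroneous component-wise analysis and run the sandwich argument cleanly, you recover the paper's proof.
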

\begin{proof}
We begin with the commutative diagram below:
$$\xymatrix{E^0(\Loops^{h}BA)\ar[r]^-{\cong} \ar@{->>}[d] &\mathcal{O}_{\Hom(A^*, \G\oplus\qz')} \ar@{->>}[d]\\
E^0(\Loops^{h}BA)/I_A\ar[d] &\mathcal{O}_{\Level(A^*, \G\oplus\qz')}\ar@{^{(}->}[d] \\
C_0 \otimes_{E^0} E^0(\Loops^{h}BA)/I_A\ar[r]^-{\cong} &\Prod{\Level(A^*, \qz \oplus \qz')} C_0.}$$
The top horizontal map is the isomorphism of Proposition \ref{hkrcor}. The top right vertical map is a surjection because $\Level(A^*, \G \oplus \qz')$ is a closed subscheme of $\Hom(A^*, \G \oplus \qz')$. The injection $\mathcal{O}_{\Level(A^*, \G\oplus\qz')}\hookrightarrow \Prod{\Level(A^*, \qz \oplus \qz')} C_0$ is induced by base change:
\[
\mathcal{O}_{\Level(A^*, \G\oplus\qz')}\longrightarrow C_0\otimes_{E^0}\mathcal{O}_{\Level(A^*, \G\oplus\qz')}\cong
\mathcal{O}_{\Level(A^*, \qz \oplus \qz')}\cong \Prod{\Level(A^*, \qz \oplus \qz')} C_0.
\]
It is an injection since $\mathcal{O}_{\Level(A^*, \G\oplus\qz')}$ is a finitely generated free $E^0$-module. The bottom horizontal map is the isomorphism of
Lemma \ref{charlevel}.

The bottom left map is induced by the character map. By \cite[Theorem C]{hkr}, it lands in the $\GL_n(\Z_p)$-invariants; this is $\Q \otimes E^0(\Loops^{h}BA)/I_A$. Thus we have the induced diagram
$$\xymatrix{E^0(\Loops^{h}BA)\ar[r]^-{\cong} \ar@{->>}[d] &\mathcal{O}_{\Hom(A^*, \G\oplus\qz')} \ar@{->>}[d]\\
(E^0(\Loops^{h}BA)/I_A)^{\tors} \ar@{^{(}->}[d] \ar@{-->}[r] &\mathcal{O}_{\Level(A^*, \G\oplus\qz')}\ar@{^{(}->}[d] \\
C_0 \otimes_{E^0} E^0(\Loops^{h}BA)/I_A\ar[r]^-{\cong} &\Prod{\Level(A^*, \qz \oplus \qz')} C_0.}$$
Using this diagram, we may produce the dashed arrow by choosing a lift of an element in $(E^0(\Loops^{h}BA)/I_A)^{\tors}$ and then pushing it into $\mathcal{O}_{\Level(A^*, \G\oplus\qz')}$. This is well-defined since the bottom vertical maps are injective. It is an isomorphism since we can produce an inverse by the reverse procedure.

Compatibility with the maps to $\Hom(A^*,\qz')$ follows from the fact that the isomorphism is compatible with the isomorphism of Proposition \ref{hkrcor} by construction.
\end{proof}

\begin{remark}
It would be nice to know when the canonical map $$E^0(\Loops^hBA)/I_{A} \to (E^0(\Loops^hBA)/I_{A})^{\tors}$$ is an isomorphism. It is known that this map is an isomorphism when $A$ is cyclic and is not necessarily an isomorphism when the rank of $A$ is greater than $n$.
\end{remark}

\begin{example}
We show that the map $E^0(\Loops^hBA)/I_{A} \to (E^0(\Loops^hBA)/I_{A})^{\tors}$ is not an isomorphism when $E = K_2$ is $2$-adic $K$-theory, $A = \Z/2 \times \Z/2$, and $h=0$. We thank Jeremy Hahn for pointing this out to us several years ago.

In this case, after picking the coordinate on $\hat{\G}_m$ with formal group law $x+y+xy$, we have
\[
K_{2}^{0}(BA)\cong K_{2}^{0}\powser{x, y}/([2](x), [2](y))=K_{2}^{0}\powser{x, y}/(x^2+2x, y^2+2y).
\]
Recalling \cite[Proposition 4.2]{quillencomplex}, the transfer ideal $I_{A}$ is generated
by $\langle 2 \rangle(x), \langle 2 \rangle(y)$, and $\langle 2 \rangle(x+y+xy)$, corresponding to the three maximal subgroups of $A$. Here, $x\langle 2 \rangle(x) = [2](x)$. In our case, these power series are the polynomials $x+2$, $y+2$, and $xy+x+y+2$. Since
\[
2 = (x+2) + (y+2) + (xy + x + y +2) - (x+2)(y+2),
\]
we have an equality of ideals
\[
(2, x, y) = (x+2, y+2, xy+x+y+2)
\]
so $K_{2}^{0}(BA)/I_A \cong K_{2}^{0}/(2) \cong \F_2$.
\end{example}

\section{Decomposing the transfer ideal}\label{decotran}
Let $f \colon \lat' \to A$. We will associate a family of subgroups $\cF_f$ of $A$ to the map $f$. Recall that a family of subgroups of a finite group $G$ is a set of subgroups that is closed under conjugation and taking subgroups. Thus, given a collection of subgroups of a group, we may form a family of subgroups by closing the set of subgroups under conjugation and taking subgroups. The family formed in this way is the minimal family of subgroups containing the chosen set of subgroups. We define
\[
F_f = \{H \subset A| H \text{ is proper and }f \text{   does factor through } H\}
\]
and define $\cF_f$ to be the minimal family of subgroups of $A$ containing $F_f$.

\begin{example} \label{ex1}
If $f \colon \lat' \to A$ is surjective, then $F_f = \emptyset$ and $\cF_f = \emptyset$.
\end{example}

\begin{example}\label{ex2}
Assume that $A = C_{p^k}$. If $f \colon \lat' \to C_{p^k}$ is not surjective, then $\im f = C_{p^h}$ for $h<k$. In this case, $F_f = \{C_{p^j}|h \leq j < k\}$ and $\cF_f$ is the family of all proper subgroups of $C_{p^k}$. Thus every non-surjective map from $\lat'$ to $C_{p^k}$ gives rise to the same family.
\end{example}

\begin{example}
Assume that $A = C_4 \times C_4$. If $\im f = C_2 \times \{e\} \subset C_4 \times C_4$, then the family of subgroups $\cF_f$ is the family of all proper subgroups of $A$. However, if $\im f = C_4 \times \{e\}$, then $\cF_f$ is the minimal family of subgroups containing $C_4 \times C_2 \subset C_4 \times C_4$. Therefore $\cF_f$ need not contain all of the proper subgroups of $A$.
\end{example}

\begin{example}
Assume that $A = C_{2}^{\times 3}$ and assume that $\im f = C_2 \times \{e\} \times \{e\} \subset C_{2}^{\times 3}$. Then $\cF_f$ consists of the minimal family of subgroups of $C_{2}^{\times 3}$ containing the three maximal subgroups that contain $C_2 \times \{e\} \times \{e\}$. This shows that sometimes $\cF_f$ is not determined by a single maximal subgroup of $A$.
\end{example}

Given an abelian $p$-group $A$, we will write $\cF_A$ for the family of all proper subgroups of $A$. Note that $\cF_A = \cF_0$, where $0 \colon \lat' \to A$ is the zero map. Given a map $q \colon A \to A'$ and a family $\cF$ of subgroups of $A'$, we set $q^* \cF$ to be the minimal family of proper subgroups of $A$ containing the set of subgroups $\{q^{-1}(H)|H \in \cF\}$. 

The next lemmas follow from the definition of the family $\cF_f$ and basic facts about finite abelian groups.

\begin{lemma}\label{basics}
Let $f \colon \lat ' \to A$, let $q \colon A \to A/\im f$ be the quotient map, and recall that $\cF_{A/\im f}$ is the family of all proper subgroups of $A/\im f$. Then
\begin{itemize}
\item $\cF_f = q^*\cF_{A/\im f}$; and
\item if $\im f \subseteq pA$, then $\cF_f = \cF_{A}$.
\end{itemize}
\end{lemma}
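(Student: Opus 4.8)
The plan is to prove the two bullet points of \Cref{basics} directly from the definitions, using the order-correspondence for subgroups under a quotient map.

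For the first bullet, $\cF_f = q^*\cF_{A/\im f}$, the key observation is that a proper subgroup $H \subseteq A$ contains $\im f$ (equivalently, $f$ factors through $H$) if and only if $H = q^{-1}(q(H))$ with $q(H)$ a proper subgroup of $A/\im f$. Indeed, since $\ker(q) = \im f$, the correspondence theorem says that subgroups of $A$ containing $\im f$ are in order-preserving bijection with subgroups of $A/\im f$ via $H \mapsto q(H)$ and $\bar{H} \mapsto q^{-1}(\bar{H})$, and properness is preserved in both directions. Thus the generating set $F_f = \{H \subset A \mid H \text{ proper}, \im f \subseteq H\}$ is exactly $\{q^{-1}(\bar H) \mid \bar H \in \cF_{A/\im f}\}$ (note every proper subgroup of $A/\im f$ lies in $\cF_{A/\im f}$, since $A/\im f$ is abelian so its family of proper subgroups is just all proper subgroups). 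Since $\cF_f$ is by definition the minimal family containing $F_f$ and $q^*\cF_{A/\im f}$ is by definition the minimal family containing $\{q^{-1}(\bar H) \mid \bar H \in \cF_{A/\im f}\}$, and these two generating sets coincide, we get $\cF_f = q^*\cF_{A/\im f}$.

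For the second bullet, suppose $\im f \subseteq pA$. We must show $\cF_f = \cF_A$, i.e. every proper subgroup of $A$ lies in $\cF_f$. Since $\cF_f$ is closed under taking subgroups, it suffices to show every \emph{maximal} proper subgroup $H$ of $A$ lies in $\cF_f$; and for maximal subgroups it suffices to show $H \in F_f$, i.e. $\im f \subseteq H$. But a maximal subgroup of a finite abelian $p$-group $A$ has index $p$, hence contains $pA$ (as $A/H$ has exponent $p$, the map $A \xrightarrow{p} A$ lands in $H$). Therefore $\im f \subseteq pA \subseteq H$, so $H \in F_f \subseteq \cF_f$. This proves $\cF_A \subseteq \cF_f$, and the reverse inclusion is immediate since every subgroup in $F_f$ is proper. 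Alternatively, one can deduce this from the first bullet: if $\im f \subseteq pA$ then $A/\im f$ surjects onto $A/pA \cong (\Z/p)^{\rk A}$, and every maximal subgroup of $A$ is the preimage of a maximal (index $p$) subgroup of $A/\im f$, so $q^*\cF_{A/\im f}$ contains all maximal subgroups of $A$, hence equals $\cF_A$.

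The main (minor) obstacle is simply being careful that the closure operations match up: one must check that closing $F_f$ under subgroups and conjugation gives the same family as closing $\{q^{-1}(\bar H)\}_{\bar H \in \cF_{A/\im f}}$, which is automatic because the two \emph{generating sets are literally equal} by the correspondence theorem — conjugation plays no role here since $A$ is abelian. Everything else is a routine application of the subgroup correspondence under a quotient and the structure of maximal subgroups of abelian $p$-groups, so I would keep the written proof to a few lines.
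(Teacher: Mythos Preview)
Your proof is correct and follows essentially the same approach as the paper: the first bullet is deduced from the subgroup correspondence under the quotient $q$ (what the paper calls the third isomorphism theorem), and the second from the fact that every maximal proper subgroup of a finite abelian $p$-group contains $pA$ (phrased in the paper as the bijection between maximal subgroups of $A$ and of $A/pA$). Your write-up is more detailed than the paper's two-sentence sketch, but the underlying argument is the same.
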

\begin{proof}
The first statement follows from the third isomorphism theorem. The maximal proper subgroups of $A$ that contain $\im f$ are in bijective correspondence with the maximal proper subgroups of $A/\im f$. The second statement follows from the fact that maximal proper subgroups of $A$ are in bijective correspondence with the maximal proper subgroups of $A/pA$. 
\end{proof}

\begin{lemma}\label{basics2}
Let $\pi \colon A \times B \to A$ be the projection and let $\cF$ be a family of subgroups of $A$. It follows that $\pi^* \cF$ is the family of subgroups of $A \times B$ generated by the subgroups of the form $H \times B$, where $H \in \cF$.
\end{lemma}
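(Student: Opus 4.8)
The plan is to identify both families described in the statement with the collection
\[
\mathcal{G} := \{K \subseteq A \times B \mid \pi(K) \in \cF\}
\]
of subgroups of $A \times B$, and then to conclude that the two families agree because each of them equals $\mathcal{G}$.

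First I would verify that $\mathcal{G}$ is itself a family of subgroups of $A \times B$. It is closed under passage to subgroups: if $K' \subseteq K$ then $\pi(K') \subseteq \pi(K) \in \cF$, and $\cF$ is closed under subgroups, so $\pi(K') \in \cF$. It is closed under conjugation because $\pi$ is a group homomorphism, so $\pi(gKg^{-1}) = \pi(g)\pi(K)\pi(g)^{-1}$, and $\cF$ is closed under conjugation (in the abelian setting of interest conjugation is trivial and there is nothing to check). Next I would record the elementary observation that for a subgroup $H \subseteq A$ one has $\pi^{-1}(H) = H \times B$, and that $\pi(\pi^{-1}(H)) = H$ since $\pi$ is surjective. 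In particular, the generating set $\{\pi^{-1}(H) \mid H \in \cF\}$ appearing in the definition of $\pi^*\cF$ is exactly the set $\{H \times B \mid H \in \cF\}$ of subgroups appearing in the statement, and every such subgroup lies in $\mathcal{G}$.

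The only step that is not pure unwinding of definitions is minimality, which I would handle as follows. Suppose $\mathcal{H}$ is any family of subgroups of $A \times B$ containing all of the subgroups $H \times B$ with $H \in \cF$; I claim $\mathcal{G} \subseteq \mathcal{H}$. Given $K \in \mathcal{G}$, set $H = \pi(K) \in \cF$ and let $\rho \colon A \times B \to B$ denote the other projection; then $K \subseteq \pi(K) \times \rho(K) \subseteq H \times B$, so $K \in \mathcal{H}$ because $\mathcal{H}$ is closed under taking subgroups. This shows simultaneously that $\mathcal{G}$ is the minimal family containing $\{\pi^{-1}(H) \mid H \in \cF\}$, i.e. $\mathcal{G} = \pi^*\cF$, and that $\mathcal{G}$ is the family generated by $\{H \times B \mid H \in \cF\}$; comparing the two identifications yields the lemma. (If one wishes to be careful about the convention that these families consist of proper subgroups, note that $H \times B$ is proper in $A \times B$ precisely when $H$ is proper in $A$, and in all of our applications $\cF$ is a family of proper subgroups.) The containment $K \subseteq H \times B$ in the minimality argument is the only mildly nontrivial point; everything else is formal.
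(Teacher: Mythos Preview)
Your argument is correct. In the paper the lemma is stated without proof (the authors simply note that it and the preceding lemma ``follow from the definition of the family $\cF_f$ and basic facts about finite abelian groups''), so there is no paper proof to compare against in detail.

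That said, your write-up does more work than the lemma strictly requires. Once you observe that $\pi^{-1}(H) = H \times B$ for every $H \subseteq A$, the generating set $\{\pi^{-1}(H) \mid H \in \cF\}$ defining $\pi^*\cF$ is literally the set $\{H \times B \mid H \in \cF\}$, so the two minimal families coincide tautologically; this is presumably what the authors had in mind. Your further identification of this common family with $\mathcal{G} = \{K \subseteq A \times B \mid \pi(K) \in \cF\}$ is correct and gives a useful explicit description that the paper does not state, but it is not needed for the lemma itself.
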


Let $I_{\cF_f}$ be the ideal of $E^0(BA)$ generated by the image of transfers from $H \in \cF_f$.

\begin{example}
Assume $A = C_{p^k}$. Fix a coordinate $x$ on $\G$, so that $E^0(BC_{p^k}) \cong E^0 \powser{x}/[p^k](x)$, where $[p^k](x)$ is the $p^k$-series. If $f \colon \lat ' \to C_{p^k}$ is surjective, then Example \ref{ex1} implies that $I_{\cF_f} = (0)$. However, if $f$ is not surjective, then Example \ref{ex2} implies that $\cF_f = \cF_{C_{p^k}}$ and that the ideal $I_{\cF_f} = (\langle p^k \rangle(x))$, where $\langle p^{k} \rangle (x) = [p^k](x)/[p^{k-1}](x)$.
\label{cyclictr}\end{example}

The pullback of families of subgroups and taking the quotient by the transfer ideal interact well together.

\begin{prop}
Let $q \colon A \to A'$ be a surjective map of abelian $p$-groups and let $\cF$ be a family of subgroups in $A'$. The map $q$ induces a map of $E^0$-algebras
\[
E^0(BA')/I_{\cF} \to E^0(BA)/I_{q^*\cF}.
\]
\end{prop}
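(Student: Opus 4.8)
The plan is to use the functoriality of transfers along the map $Bq \colon BA \to BA'$ together with the defining property of the pullback family $q^*\cF$. First I would recall that for any subgroup $H' \subseteq A'$, the preimage $H := q^{-1}(H')$ fits into a pullback square of covering spaces
\[
\xymatrix{BH \ar[r] \ar[d] & BA \ar[d]^{Bq} \\ BH' \ar[r] & BA',}
\]
since $q$ is surjective and $H$ is exactly the preimage; here the horizontal maps are the covering maps induced by the inclusions and the fibers of both vertical maps are identified with $A/H \cong A'/H'$. Applying $E^0(-)$ and the double coset / base change formula for transfers, one gets a commuting square
\[
\xymatrix{E^0(BH') \ar[r]^-{\Tr_{H',A'}} \ar[d]_{(Bq)^*} & E^0(BA') \ar[d]^{(Bq)^*} \\ E^0(BH) \ar[r]^-{\Tr_{H,A}} & E^0(BA).}
\]
Hence the ring map $(Bq)^* \colon E^0(BA') \to E^0(BA)$ carries the image of $\Tr_{H',A'}$ into the image of $\Tr_{H,A}$, for every $H' \in \cF$.

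Next I would observe that $q^{-1}(H') \in q^*\cF$ by the very definition of $q^*\cF$ as the minimal family containing all such preimages, so $\im(\Tr_{q^{-1}(H'),A}) \subseteq I_{q^*\cF}$. Combining this with the square above, $(Bq)^*$ sends every generator of $I_{\cF}$ (the images of the $\Tr_{H',A'}$ for $H' \in \cF$, or just the maximal such $H'$) into $I_{q^*\cF}$. A small point to address: the family $\cF$ is generated by a set of subgroups under passing to subgroups and conjugation, but since $A'$ is abelian only passing to subgroups is relevant, and $\im(\Tr_{H'',A'}) \subseteq \im(\Tr_{H',A'})$ whenever $H'' \subseteq H'$ (transitivity of transfer, as noted in the excerpt after Definition \ref{tranideal}), so it suffices to control the maximal members of $\cF$; the same remark applies to $q^*\cF$. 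Therefore $(Bq)^*(I_\cF) \subseteq I_{q^*\cF}$, and $(Bq)^*$ descends to the claimed map $E^0(BA')/I_\cF \to E^0(BA)/I_{q^*\cF}$ of $E^0$-algebras.

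The main obstacle is the base-change formula for transfers in the square of covering spaces, i.e. verifying that $(Bq)^* \circ \Tr_{H',A'} = \Tr_{H,A} \circ (Bq')^*$ where $q' \colon H \to H'$ is the restriction of $q$. This is a standard property of transfer maps for finite covers (it follows from the pullback description of the cover $BH \to BA$ along $Bq$, together with the fact that $\Tr$ for a genuine finite cover is natural with respect to pullback of the cover), so I would cite it rather than reprove it; the only thing to check carefully is that the square of classifying spaces really is a pullback of covers, which holds precisely because $q$ is surjective so that $q^{-1}(H') \to H'$ is again a cover with the same fiber $A'/H'$. Everything else is formal: functoriality of $E^0(-)$, the transitivity of transfer to reduce to maximal subgroups, and the definition of $q^*\cF$.
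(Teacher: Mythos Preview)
Your argument is correct and matches the paper's approach: reduce to maximal subgroups in $\cF$, establish the homotopy pullback of $BH' \to BA'$ along $Bq$, and use base change for transfers to carry $I_\cF$ into $I_{q^*\cF}$ (the paper verifies the pullback explicitly via the double coset groupoid $(A'/H')\mmod A$, while you cite the standard property). One harmless slip: in your square it is the \emph{horizontal} covers $BH \to BA$ and $BH' \to BA'$ that share fiber $A/H \cong A'/H'$, not the vertical maps induced by $q$ (those have fiber $\ker q$).
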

\begin{proof}
The ideals $I_{\cF}$ and $I_{q^*\cF}$ are determined by the maximal subgroups in $\cF$ and $q^*\cF$. Assume that $H \in \cF$ is maximal, then $q^{-1}H \subset A$ is a maximal subgroup of $q^*\cF$ and all of the maximal subgroups of $q^*\cF$ arise in this way by the third group isomorphism theorem. There is a homotopy pullback of spaces
\[
\xymatrix{Bq^{-1}H \ar[r] \ar[d] & BH \ar[d] \\ BA \ar[r] & BA'.}
\]
This follows from the fact that homotopy pullback is the geometric realization of the ``double coset groupoid" $(A'/H) \mmod A$. Since $q$ is surjective, $A$ acts transitively on $A'/H$. The stabilizer of the coset $eH$ is precisely $q^{-1}H$.

Applying $E$-cohomology gives the commutative diagram
\[
\xymatrix{E^0(BH)\ar[r]\ar[d]^{\Tr^{E}_{H,A'}} &E^0(Bq^{-1} H)\ar[d]^{\Tr^{E}_{q^{-1}H,A}} \\ E^0(BA')\ar[r] &E^0(BA).}
\]
Frobenius reciprocity implies that the image of the transfer map is an ideal. Thus we have a well-defined map of $E^0$-algebras
\[
E^0(BA')/\im(\Tr^{E}_{H,A'}) \longrightarrow E^0(BA)/\im(\Tr^{E}_{q^{-1}H,A}).
\]
Taking the sum of the images of the transfer maps as we vary through the maximal subgroups in $\cF$, we attain the desired map.
\end{proof}

Applying this proposition to $q \colon A \to A/\im(f)$ and $\cF_{A/\im(f)}$ gives the following corollary:

\begin{cor}
The quotient map $A \to A/\im(f)$ induces a map
\[
E^0(BA/\im(f))/I_{\cF_{A/\im(f)}} \to E^0(BA)/I_{\cF_f},
\]
where $\cF_{A/\im(f)}$ is the family of all proper subgroups of $A/\im(f)$. \label{quotient}
\end{cor}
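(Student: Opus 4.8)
The plan is to apply the preceding Proposition essentially verbatim, taking $A' = A/\im(f)$, taking $q \colon A \to A/\im(f)$ to be the quotient map, and taking $\cF = \cF_{A/\im(f)}$, the family of all proper subgroups of $A/\im(f)$. First I would check that the hypotheses are met: $q$ is surjective by construction, and $A/\im(f)$ is again a finite abelian $p$-group, being a quotient of one. The Proposition then produces a map of $E^0$-algebras
\[
E^0(B(A/\im(f)))/I_{\cF_{A/\im(f)}} \longrightarrow E^0(BA)/I_{q^*\cF_{A/\im(f)}},
\]
induced at the level of rings by $q^* \colon E^0(B(A/\im(f))) \to E^0(BA)$.

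To reach the statement as phrased, it remains only to identify the family $q^*\cF_{A/\im(f)}$ of subgroups of $A$ occurring in the target. This identification is exactly the first bullet of Lemma \ref{basics}, which asserts $\cF_f = q^*\cF_{A/\im f}$ (itself a consequence of the third isomorphism theorem, matching the maximal proper subgroups of $A$ containing $\im f$ with the maximal proper subgroups of $A/\im f$, together with the observation that these maximal members determine the corresponding transfer ideals). Substituting $\cF_f = q^*\cF_{A/\im f}$ into the target of the displayed map yields the claimed map $E^0(B(A/\im(f)))/I_{\cF_{A/\im(f)}} \to E^0(BA)/I_{\cF_f}$.

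There is no genuine obstacle here: the corollary is a formal consequence of the Proposition and Lemma \ref{basics}. The only mild point worth spelling out is that the ``default'' family $\cF_{A/\im f}$ of all proper subgroups of $A/\im f$ pulls back along $q$ to precisely the family $\cF_f$ attached to $f$ — and this has already been recorded in Lemma \ref{basics}, so the proof is genuinely a one-line appeal.
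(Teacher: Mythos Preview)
Your proof is correct and matches the paper's approach exactly: the paper states the corollary immediately after the Proposition with the one-line justification ``Applying this proposition to $q \colon A \to A/\im(f)$ and $\cF_{A/\im(f)}$ gives the following corollary,'' leaving implicit the identification $q^*\cF_{A/\im(f)} = \cF_f$ from Lemma~\ref{basics} that you spelled out.
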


Given $f \colon \lat' \to A$ and a decomposition $A = M \oplus K$, we will write $f_M \colon \lat' \to M$ for the composite of $f$ with the projection onto $M$.

\begin{prop}\label{fdecomp}
Assume given $f \colon \lat' \to A$ and a decomposition $A = M \times K$ such that $e \times K \subset \im f$ and $\im f_M \subset pM$, then there is an isomorphism
\[
E^0(BA)/I_{\cF_f}\cong E^0(BM)/I_M\otimes_{E^0} E^0(BK)
\]
depending on the decomposition of $A$.
\end{prop}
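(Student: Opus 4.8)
The plan is to reduce everything to the cyclic computation of \Cref{cyclictr} together with the Künneth isomorphism $E^0(BA) \cong E^0(BM) \otimes_{E^0} E^0(BK)$. First I would identify the family $\cF_f$ explicitly under the stated hypotheses. Since $e \times K \subseteq \im f$, the image $\im f$ contains the $K$-summand, so by the third isomorphism theorem $A/\im f \cong M/\im f_M$; and since $\im f_M \subseteq pM$, the second bullet of \Cref{basics} (applied to $M$ in place of $A$) gives $\cF_{f_M} = \cF_M$, the family of all proper subgroups of $M$. Combining the first bullet of \Cref{basics} with \Cref{basics2}, I would conclude that $\cF_f = \pi_M^* \cF_M$, i.e. $\cF_f$ is generated by the subgroups $H \times K$ for $H \subset M$ proper (equivalently maximal). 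This is the key structural step: it pins down $I_{\cF_f}$ as being generated by transfers along exactly the subgroups $H \times K \subset M \times K$ with $H$ maximal in $M$.

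Next I would compute the ideal. Under the Künneth isomorphism, the transfer along $H \times K \hookrightarrow M \times K$ is the external product of the transfer $\Tr^E_{H,M} \colon E^0(BH) \to E^0(BM)$ with the identity on $E^0(BK)$ (this is the standard multiplicativity of transfers for a product of covers, one factor being trivial). Hence the image of $\Tr^E_{H \times K, M \times K}$ is $\im(\Tr^E_{H,M}) \otimes_{E^0} E^0(BK)$, and summing over maximal $H \subset M$ gives
\[
I_{\cF_f} = \Big(\sum_{H \subsetneq M \text{ maximal}} \im(\Tr^E_{H,M})\Big) \otimes_{E^0} E^0(BK) = I_M \otimes_{E^0} E^0(BK),
\]
where I use that $E^0(BK)$ is free, hence flat, over $E^0$ so that the tensor product of the inclusion $I_M \hookrightarrow E^0(BM)$ with $E^0(BK)$ stays an inclusion and identifies the subobject correctly. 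Quotienting the Künneth isomorphism by this ideal yields
\[
E^0(BA)/I_{\cF_f} \cong \big(E^0(BM) \otimes_{E^0} E^0(BK)\big)/\big(I_M \otimes_{E^0} E^0(BK)\big) \cong E^0(BM)/I_M \otimes_{E^0} E^0(BK),
\]
again using flatness of $E^0(BK)$ over $E^0$ for the last step.

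The main obstacle I anticipate is the careful bookkeeping in the identification of $\cF_f$ — in particular verifying that the hypothesis $\im f_M \subseteq pM$ is exactly what forces $\cF_{f_M}$ to be \emph{all} proper subgroups of $M$ rather than a proper subfamily, and that no maximal subgroup of $A$ outside the list $\{H \times K\}$ can contain $\im f$. Both points are handled by the observation that maximal subgroups of $A = M \times K$ containing $K$ correspond to maximal subgroups of $M$, together with \Cref{basics}; once the family is correctly identified, the remaining steps are routine flatness and multiplicativity-of-transfer arguments. One should also take a moment to check that the resulting isomorphism is independent of nothing more than the chosen decomposition $A = M \times K$, which is already flagged in the statement.
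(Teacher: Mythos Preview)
Your proposal is correct and follows essentially the same route as the paper: both arguments first establish $\cF_f = \pi^*\cF_M$ via \Cref{basics} and \Cref{basics2} (the paper organizes this via the commutative square $M\times K \to M$, $M\times K \to (M\times K)/\im f \cong M/\im f_M$, which is exactly your third-isomorphism-theorem step), and then deduce the ring isomorphism from the K\"unneth decomposition. You are more explicit than the paper about the transfer-multiplicativity and flatness steps at the end, which the paper compresses into ``The result follows''; your opening remark about reducing to \Cref{cyclictr} is not actually used and can be dropped.
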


\begin{proof}
It suffices to show that $\cF_f = \pi^*\cF_M$. If that is the case, then $I_{\cF_f} \subseteq E^0(BM \times K)$ is generated by the image of transfers along subgroups of the form $H \times K$, where $H$ is a proper subgroup of $M$. The result follows.

To see that $\cF_f = \pi^*\cF_M$, consider the following commutative square of abelian groups
\[
\xymatrix{M \times K \ar[r]^{\pi} \ar[d]_{q} & M \ar[d]^{q_M} \\ (M \times K)/\im f \ar[r]^{\cong} & M/\im f_M,}
\]
in which the vertical maps are both quotient maps. Lemma \ref{basics} implies that $q^* \cF_{(M \times K)/\im f} = \cF_f$. Going around the square the other way, we see that since $\im f_M \subset pM$, Lemma \ref{basics} implies that each $q_{M}^*\cF_{M/\im f_M} = \cF_M$ and Lemma \ref{basics2} implies that $\cF_f = \pi^*q_M^*\cF_{M/\im f_M} = \pi^*\cF_M$.
\end{proof}

Let $S_f = \{e(\rho \colon A \to S^1) | \ker(\rho) \in \cF_f\}$, the set of Euler classes of nontrivial irreducible representations of $A$ with kernel in the family determined by $f$. As with $\cF_f$, we will write $S_A$ for the set of Euler classes of all nontrivial irreducible representations of $A$.

\begin{example}
Assume $A = C_{p^k}$. Fix a coordinate $x$ on $\G$, so that $E^0(BC_{p^k}) \cong E^0 \powser{x}/[p^k](x)$, where $[p^k](x)$ is the $p^k$-series. If $f \colon \lat ' \to C_{p^k}$ is surjective, $S_f = \emptyset$. However, if $f$ is not surjective, then $S_f = \{x, [2](x), [3](x), \ldots, [p^{k}-1](x)\}$.
\label{cyclictr2}\end{example}

Just as in the classical case, described in Section \ref{recol}, the localization map $E^0(BA) \to S_{f}^{-1}E^0(BA)$ factors through the quotient $E^0(BA)/I_{\cF_f}$ to give a canonical map of $E^0$-algebras
\[
E^0(BA)/I_{\cF_f} \to S_{f}^{-1}E^0(BA).
\]
This follows from the fact that if $\rho \colon A \to S^1$ has the property that $\ker(\rho) \in \cF_f$ so that $e(\rho) \in S_f$, then the restriction of $e(\rho)$ to $E^0(B\ker(\rho))$ is zero. Frobenius reciprocity then implies that multiplication by $e(\rho)$ kills the image of the transfer from $\ker(\rho)$. Thus, inverting all of these Euler classes kills the entire transfer ideal.

This leads us to the following proposition relating the geometric fixed points construction in equivariant stable homotopy theory and transfer ideals of the form $I_{\cF_f}$:

\begin{prop}
There are canonical isomorphisms of $E^0$-algebras
\[
\pi_0\Phi^{\cF_f}(\underline{E}) \cong S_{f}^{-1}E^0(BA) \cong S_{f}^{-1}E^0(BA)/I_{\cF_f},
\]
where $\Phi^{\cF_f}(\underline{E})$ is the geometric fixed points for the family of subgroups $\cF_f$ of the Borel completion $\underline{E}$ of the spectrum $E$.
\label{geofixed}
\end{prop}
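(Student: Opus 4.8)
The plan is to establish the two isomorphisms separately. The second isomorphism, $S_f^{-1}E^0(BA) \cong S_f^{-1}E^0(BA)/I_{\cF_f}$, is essentially immediate: the discussion just before the statement shows that the localization map $E^0(BA) \to S_f^{-1}E^0(BA)$ factors through $E^0(BA)/I_{\cF_f}$, so inverting $S_f$ on both sides of the surjection $E^0(BA)/I_{\cF_f} \to S_f^{-1}E^0(BA)$ (whose kernel is already killed after localizing, by Frobenius reciprocity) gives the claim. I would spell this out in one or two sentences.

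The substantive part is identifying $\pi_0\Phi^{\cF_f}(\underline{E})$ with $S_f^{-1}E^0(BA)$. First I would recall the standard description of geometric fixed points for a family: $\Phi^{\cF}(\underline{E})$ is the smash product $\widetilde{E\mathcal{P}} \wedge \underline{E}$ with the appropriate $A$-fixed points taken, where $\widetilde{E\mathcal{P}}$ is the cofiber of $(E\cF)_+ \to S^0$ for the family $\cF = \cF_f$. Here the key point is that $\cF_f$ is generated by the kernels of the characters $\rho$ with $e(\rho) \in S_f$; equivalently, a subgroup $H \subseteq A$ lies in $\cF_f$ iff some $\rho$ with $e(\rho) \in S_f$ is trivial on $H$. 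Standard equivariant homotopy theory (the "isotropy separation"/geometric fixed points calculation for Borel-complete spectra, as in tom Dieck or Lewis--May--Steinberger, and used for instance in the work of Strickland and of Schwede) tells us that $\pi_0$ of the $A$-fixed points of $\widetilde{E\cF_f} \wedge \underline{E}$ is obtained from $E^0(BA) = \pi_0 \underline{E}^{hA}$ by inverting exactly the Euler classes of the representations whose fixed points are detected by $\cF_f$ — which is precisely the set $S_f$. I would therefore reduce the claim to this general fact, being careful to check that the collection $S_f$ is closed under the operations (products of characters, passing to tensor powers) needed to match the localization determined by the family.

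The cleanest route is probably to build a filtration of $\widetilde{E\cF_f}$ by the cofiber sequences coming from adding one representation sphere at a time: for a finite set of characters $\rho_1, \dots, \rho_r$ whose kernels generate $\cF_f$, one has $\widetilde{E\cF_f} \simeq S^{\infty V}$ where $V = \bigoplus \rho_i$ (as a colimit of representation spheres), and smashing with $\underline{E}$ and taking fixed points turns the inclusions $S^0 \to S^{\rho_i}$ into multiplication by the Euler classes $e(\rho_i)$ on $\pi_0\underline{E}^{hA} = E^0(BA)$. Taking the colimit gives $\pi_0\Phi^{\cF_f}(\underline{E}) \cong (e(\rho_1)\cdots e(\rho_r))^{-1} E^0(BA)$, and I would then argue that inverting this finite product of Euler classes is equivalent to inverting all of $S_f$ — using, as in the proof of \cref{lemthefirst}, that every element of $S_f$ divides a power of something in the ideal generated by the $[p^j]$-series of the $e(\rho_i)$, together with Frobenius reciprocity to see every Euler class in $S_f$ becomes a unit. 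The main obstacle will be bookkeeping: verifying that the family $\cF_f$ is exactly the family of subgroups lying in the kernel of some element of $S_f$ (so that geometric fixed points for $\cF_f$ really does see the localization at $S_f$ and nothing more), and correctly invoking the convergence of the representation-sphere colimit for the Borel-complete spectrum $\underline{E}$; this last point needs the fact that $E$ is even periodic with $E^0(BA)$ a finitely generated free $E^0$-module, so there are no $\lim^1$ obstructions.
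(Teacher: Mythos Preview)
Your approach is correct and essentially reproves from scratch the result that the paper simply cites: the paper's entire proof of the first isomorphism is ``follows from Proposition 3.20 in \cite{gmequivariant}'' (Greenlees--May), and for the second isomorphism it just says ``follows from the discussion above,'' exactly as you indicate. So the paper treats this as a black-box citation, while you sketch the standard representation-sphere model for $\widetilde{E\cF_f}$ and the resulting Euler-class localization. Both routes are valid; yours is more self-contained, the paper's is more economical.

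One small simplification in your sketch: rather than taking $V$ to be built from a minimal generating set of characters and then arguing (via a divisibility argument in the spirit of \cref{lemthefirst}) that inverting those few Euler classes inverts all of $S_f$, you can simply take $V$ to be the direct sum of \emph{all} characters $\rho$ with $\ker(\rho)\in\cF_f$. Since $A$ is finite this is still a finite-dimensional representation, one still has $V^H\neq 0$ iff $H\in\cF_f$ (because the maximal elements of $\cF_f$ are index-$p$ and hence appear as kernels of order-$p$ characters), and now $e(V)=\prod_{e(\rho)\in S_f} e(\rho)$, so inverting $e(V)$ is tautologically the same as inverting $S_f$. This sidesteps the divisibility bookkeeping entirely. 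Your concern about $\lim^1$ is also unnecessary here: the colimit computing $\pi_0\Phi^{\cF_f}(\underline{E})$ is a sequential colimit of modules (multiplication by $e(V)$), not an inverse limit, so no derived-limit issues arise.
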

\begin{proof}
The first isomorphism follows from Proposition 3.20 in \cite{gmequivariant}. The second isomorphism follows from the discussion above.
\end{proof}

\begin{remark}
Note that Proposition \ref{geofixed} is true for any family $\cF$ of subgroups of $A$ and the corresponding set of Euler classes $S_{\cF}:= \{e(\rho \colon A \to S^1) | \ker(\rho) \in \cF\}$.
Not all families $\cF$ are contained in the set $\{\cF_f | f\in \hom(\lat', A)\}.$ We do not give an algebro-geometric interpretation to each $E^0$-algebra of the form $E^0(BA)/I_{\cF}$. 
\end{remark}

Finally, we extend Proposition \ref{hkrresult2} to families of the form $\cF_f$. Let $T_{A,f} = \im(E^0(BA) \to S_{f}^{-1}E^0(BA)$.

\begin{prop} \label{Taf}
Assume given $f \colon \lat' \to A$ and a decomposition $A \cong M \oplus K$ such that $e \times K \subset \im f$, $\im f_M \subset pM$, and $M/pM \cong (\Z/p)^n$, then there is a canonical isomorphism
\[
\Q \otimes T_{A,f} \cong S_{f}^{-1}E^0(BA).
\]
\end{prop}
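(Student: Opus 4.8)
The plan is to reduce the statement to Proposition~\ref{hkrresult2} by exploiting the product decomposition $A\cong M\oplus K$. The hypotheses imply $\cF_f=\pi^*\cF_M$, where $\pi\colon M\oplus K\to M$ is the projection and $\cF_M$ is the family of all proper subgroups of $M$; this is exactly what is checked in the proof of Proposition~\ref{fdecomp}, via Lemmas~\ref{basics} and~\ref{basics2}. The first --- and, I expect, only genuinely delicate --- step will be to show that inverting $S_f\subset E^0(BA)$ is equivalent to inverting $\pi^*(S_M)$. One inclusion is clear: for a nontrivial character $\sigma$ of $M$, the character $\sigma\circ\pi$ of $A$ has kernel $\ker(\sigma)\times K$, a generator of $\cF_f$, so $\pi^*(S_M)\subseteq S_f$. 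For the converse I would write a character $\rho$ of $A=M\oplus K$ as $\rho(m,k)=\rho_M(m)\rho_K(k)$ and observe, unwinding Lemma~\ref{basics2}, that $\ker(\rho)\in\cF_f$ if and only if $\pi(\ker\rho)=\rho_M^{-1}(\im(\rho_K))$ is proper in $M$, that is, if and only if $\im(\rho_M)\not\subseteq\im(\rho_K)$ in $S^1$. Since the finite subgroups of $\mu_{p^\infty}$ are totally ordered by inclusion, this holds precisely when $\rho_K$ has strictly smaller order than $\rho_M$. Letting $p^j$ be the order of $\rho_K$, we then have $\rho^{p^j}=\rho_M^{p^j}\circ\pi$ with $\rho_M^{p^j}$ nontrivial, so $[p^j](e(\rho))=e(\rho^{p^j})=\pi^*\bigl(e(\rho_M^{p^j})\bigr)\in\pi^*(S_M)$; and since $x\mid[p^j](x)$ in $E^0\powser{x}$, $e(\rho)$ divides $[p^j](e(\rho))$ in $E^0(BA)$, so inverting $\pi^*(S_M)$ makes $e(\rho)$ a unit. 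This runs parallel to the arguments behind Lemma~\ref{lemthefirst} and Proposition~\ref{hkrresult2}.

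Granting Step 1, the remainder is formal bookkeeping. The K\"unneth isomorphism $E^0(BA)\cong E^0(BM)\otimes_{E^0}E^0(BK)$ --- valid because both tensor factors are finitely generated free $E^0$-modules --- identifies $\pi^*(S_M)$ with $S_M\otimes 1$, so localizing in a single tensor factor gives $S_f^{-1}E^0(BA)\cong\bigl(S_M^{-1}E^0(BM)\bigr)\otimes_{E^0}E^0(BK)$. Applying Proposition~\ref{hkrresult2} to $M$, which is legitimate since $M/pM\cong(\Z/p)^n$, rewrites the first factor as $\Q\otimes T_M$, whence $S_f^{-1}E^0(BA)\cong(\Q\otimes T_M)\otimes_{E^0}E^0(BK)\cong\Q\otimes\bigl(T_M\otimes_{E^0}E^0(BK)\bigr)$; in particular $p$ is already invertible in $S_f^{-1}E^0(BA)$.

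It remains to identify $T_{A,f}$. The canonical map $E^0(BA)\to S_f^{-1}E^0(BA)$ annihilates $I_{\cF_f}$ (the discussion following Example~\ref{cyclictr2}), and under the isomorphism of Proposition~\ref{fdecomp} it is precisely the tensor product of the canonical map $E^0(BM)/I_M\to S_M^{-1}E^0(BM)$ with $\id_{E^0(BK)}$. Since $E^0(BK)$ is flat over $E^0$, tensoring with it commutes with taking images, so $T_{A,f}\cong T_M\otimes_{E^0}E^0(BK)$. Combining the three steps,
\[
\Q\otimes T_{A,f}\cong\Q\otimes\bigl(T_M\otimes_{E^0}E^0(BK)\bigr)\cong\bigl(S_M^{-1}E^0(BM)\bigr)\otimes_{E^0}E^0(BK)\cong S_f^{-1}E^0(BA),
\]
and every isomorphism used is canonical once the decomposition $A\cong M\oplus K$ is fixed. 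The main obstacle is Step 1 --- the group-theoretic equivalence $\ker(\rho)\in\cF_f\iff\rho_K$ has smaller order than $\rho_M$, together with the divisibility $e(\rho)\mid[p^j](e(\rho))$; everything downstream is K\"unneth, flatness, localization, and the citation of Proposition~\ref{hkrresult2}.
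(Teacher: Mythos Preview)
Your proof is correct and follows essentially the same route as the paper: reduce to Proposition~\ref{hkrresult2} via the K\"unneth decomposition along $A\cong M\oplus K$, then identify $T_{A,f}$ with $T_M\otimes_{E^0}E^0(BK)$.

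In fact you are more careful than the paper on the one substantive point. The paper asserts $\pi^*S_{f_M}=S_f$ as sets of Euler classes and deduces the tensor decomposition of the localization directly from that equality; but this equality is not literally true when $K$ is nontrivial---any character $\rho=(\rho_M,\rho_K)$ with $\rho_K\neq 1$ and $\rho_M$ of strictly larger order has $\ker(\rho)\in\cF_f$, so $e(\rho)\in S_f$, yet $\rho$ does not factor through $\pi$. What is actually needed, and what you prove in your Step~1 via the divisibility $e(\rho)\mid[p^j](e(\rho))$, is only that the two sets determine the same localization of $E^0(BA)$; this is exactly in the spirit of Lemma~\ref{lemthefirst}. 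Everything downstream (K\"unneth, flatness of $E^0(BK)$, the citation of Proposition~\ref{hkrresult2}) matches the paper.
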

\begin{proof}
Let $\pi$ be the projection $A \to M$. This induces $\pi^* \colon E^0(BM) \to E^0(BA)$. Since $e \times K \subset \im f$, we have $\pi^*\cF_{f_M} = \cF_f$ and $\pi^*S_{f_M} = S_{f}$, where $f_M = \pi f$. Thus there is an isomorphism
\[
S_{f}^{-1}E^0(BA) \cong S_{f_M}^{-1}E^0(BM) \otimes_{E^0} E^0(BK).
\]
Now since $\im f_M \subset pM$, we have $\cF_{f_M} = \cF_M$. Proposition \ref{hkrresult2} implies that
\[
S_{f_M}^{-1}E^0(BM) \cong \Q \otimes T_M.
\]
Finally, since $T_{A,f} \cong T_M \otimes_{E^0} E^0(BK)$, we have
\[
\Q \otimes T_{A,f} \cong \Q \otimes T_M \otimes_{E^0} E^0(BK) \cong S_{f_M}^{-1}E^0(BM) \otimes_{E^0} E^0(BK) \cong S_{f}^{-1}E^0(BA).
\]
\end{proof}

\begin{cor} \label{corinvert}
Under the assumptions of Proposition \ref{Taf}, there is a canonical isomorphism
\[
\Q \otimes E^0(BA)/I_{\cF_f} \cong S_{f}^{-1}E^0(BA).
\]
\end{cor}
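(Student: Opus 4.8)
The plan is to reduce Corollary~\ref{corinvert} to the two facts already assembled: Proposition~\ref{Taf}, which identifies $\Q \otimes T_{A,f}$ with $S_f^{-1}E^0(BA)$, and the observation (made just before Proposition~\ref{geofixed}) that the localization map $E^0(BA) \to S_f^{-1}E^0(BA)$ kills the transfer ideal $I_{\cF_f}$ and hence factors as
\[
E^0(BA) \twoheadrightarrow E^0(BA)/I_{\cF_f} \longrightarrow S_f^{-1}E^0(BA).
\]
By definition $T_{A,f}$ is the image of this composite, so the induced map $E^0(BA)/I_{\cF_f} \to S_f^{-1}E^0(BA)$ has image exactly $T_{A,f}$; in particular it factors through a surjection $E^0(BA)/I_{\cF_f} \twoheadrightarrow T_{A,f}$.

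First I would tensor this surjection with $\Q$ to get a surjection $\Q \otimes E^0(BA)/I_{\cF_f} \twoheadrightarrow \Q \otimes T_{A,f}$, and then invoke Proposition~\ref{Taf} to identify the right-hand side with $S_f^{-1}E^0(BA)$. It remains to check that this composite $\Q \otimes E^0(BA)/I_{\cF_f} \to S_f^{-1}E^0(BA)$ is also injective. Here I would use the decomposition of Proposition~\ref{fdecomp}: under the hypotheses of Proposition~\ref{Taf} (namely $e \times K \subset \im f$ and $\im f_M \subset pM$), we have
\[
E^0(BA)/I_{\cF_f} \cong E^0(BM)/I_M \otimes_{E^0} E^0(BK),
\]
with $E^0(BK)$ a finitely generated free $E^0$-module. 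Rationally, $\Q \otimes E^0(BM)/I_M \cong \Sect_{\Level(M^*,\G)}$ (via \eqref{ahsiso}, together with the fact noted at the end of Section~\ref{recol} that $\Q \otimes E^0(BM)/I_M$ is already torsion-free in the relevant range), and $\Sect_{\Level(M^*,\G)}$ is a domain. So $\Q \otimes E^0(BA)/I_{\cF_f}$ embeds into $S^{-1}$ of itself for $S = \pi^* S_{f_M}$, and comparing with the construction of the localization map shows the map to $S_f^{-1}E^0(BA)$ is injective on the $\Q$-rationalization. Combining surjectivity and injectivity gives the isomorphism.

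An alternative, slicker route avoids re-examining injectivity directly: observe that $\Q \otimes E^0(BA)/I_{\cF_f}$, $\Q \otimes T_{A,f}$, and $S_f^{-1}E^0(BA)$ all sit in a chain of canonical maps
\[
\Q \otimes E^0(BA)/I_{\cF_f} \longrightarrow \Q \otimes T_{A,f} \longrightarrow S_f^{-1}E^0(BA),
\]
where the second map is the isomorphism of Proposition~\ref{Taf} and the composite is the canonical localization map rationalized. Then it suffices to show the composite is an isomorphism, which reduces — via Proposition~\ref{fdecomp} and base change along the free extension $E^0 \to E^0(BK)$ — to the statement that $\Q \otimes E^0(BM)/I_M \to S_{f_M}^{-1}E^0(BM)$ is an isomorphism. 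But $\cF_{f_M} = \cF_M$ by Lemma~\ref{basics} (as $\im f_M \subset pM$), so $S_{f_M} = S_M$ and this is precisely Corollary~\ref{rationaliso1}. This packaging is cleaner because it isolates the one genuinely new input (Corollary~\ref{rationaliso1}) and treats everything else as formal manipulation of tensor products over $E^0$.

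The main obstacle, in either approach, is making sure the identifications are compatible — that the map produced by Proposition~\ref{Taf} really is the one induced by the localization map $E^0(BA)/I_{\cF_f} \to S_f^{-1}E^0(BA)$, rather than some a priori different abstract isomorphism — and, in the first approach, verifying that the flat (indeed free) base change along $E^0 \to E^0(BK)$ commutes with rationalization and with forming the image $T_{A,f}$, so that $T_{A,f} \cong T_M \otimes_{E^0} E^0(BK)$ as claimed in the proof of Proposition~\ref{Taf}. These are routine but do require care, since $E^0(BA)/I_{\cF_f}$ itself may have torsion (as the $K_2$, $A = \Z/2 \times \Z/2$ example shows), so one cannot be cavalier about when "$\Q \otimes$" commutes with the various constructions.
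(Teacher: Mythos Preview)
Your proposal is correct, and your ``alternative, slicker route'' is essentially the paper's own argument: the paper applies Proposition~\ref{fdecomp} to write $\Q \otimes E^0(BA)/I_{\cF_f} \cong \Q \otimes E^0(BM)/I_M \otimes_{E^0} E^0(BK)$, invokes the identification $\Q \otimes E^0(BM)/I_M \cong \Q \otimes T_M$ from the proof of Corollary~\ref{rationaliso1}, and then finishes via the chain already established in the proof of Proposition~\ref{Taf}. Your first approach (separate surjectivity/injectivity) also works but is more roundabout; your instinct that the second packaging is cleaner matches the paper's choice.
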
 
\begin{proof}
In the proof of Corollary \ref{rationaliso1}, we showed that $\Q \otimes E^0(BM)/I_M \cong Q \otimes T_M$. Applying Proposition \ref{fdecomp}, we have 
\[
\Q \otimes E^0(BA)/I_{\cF_f} \cong \Q \otimes E^0(BM)/I_M \otimes_{E^0} E^0(BK) \cong \Q \otimes T_M \otimes_{E^0} E^0(BK)
\] 
and the result follows.
\end{proof}

\section{The fibers}\label{fiber}

We study the fibers of the canonical isomorphism in Theorem \ref{thmiso}. The Pontryagin dual of $f \colon \lat' \to A$ is a map $f^* \colon A^* \to \qz '$.

\begin{thm} There is a canonical isomorphism of $E^0$-algebras
\[
(E^0(BA)/I_{\cF_f})^{\tors} \cong \mathcal{O}_{\Level_{f^*}(A^*, \G\oplus \qz')}.
\]
\label{fmain}
\end{thm}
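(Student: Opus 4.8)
The plan is to deduce \Cref{fmain} from \Cref{thmiso} by decomposing the main isomorphism over the components of $\Hom(A^*,\qz')$. Recall from \Cref{thmiso} that there is a canonical isomorphism
\[
(E^0(\Loops^hBA)/I_A)^{\tors} \cong \mathcal{O}_{\Level(A^*,\G\oplus\qz')}
\]
compatible with the maps to $\Hom(A^*,\qz')$. Both sides are finite products indexed by the components of $\Hom(A^*,\qz') = \Hom(\lat',A)$: on the geometric side, $\Level(A^*,\G\oplus\qz') = \Coprod{f\in\hom(\lat',A)}\Level_{f^*}(A^*,\G\oplus\qz')$ by the decomposition proposition of \Cref{level} (applied with $\lat$ replaced by $\lat'$); on the topological side, $E^0(\Loops^hBA) \cong \Prod{f\in\hom(\lat',A)}E^0(BA)$ via the equivalence $\Loops^hBA \simeq \Hom(\lat',A)\times BA$, and taking the $(-)^{\tors}$ quotient commutes with finite products. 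So the first step is to match up the factors on each side, and this is immediate from the compatibility with $\Hom(A^*,\qz')$ built into \Cref{thmiso}: the factor indexed by $f$ on the left maps isomorphically to the factor indexed by $f^*$ on the right.

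The second step — the real content — is to identify the $f$-th factor of $(E^0(\Loops^hBA)/I_A)^{\tors}$ with $(E^0(BA)/I_{\cF_f})^{\tors}$. The transfer ideal $I_A\subset E^0(\Loops^hBA)$ is, as explained in \Cref{transfer}, generated by transfers along proper subgroups $A'\subset A$, and under the equivalence $\Loops^hBA\simeq\Hom(\lat',A)\times BA$ the cover $\Loops^hBA'\to\Loops^hBA$ sends the component indexed by $t\colon\lat'\to A'$ to the component indexed by the composite $\lat'\to A'\subset A$. Hence, after projecting to the factor of $E^0(\Loops^hBA)\cong\Prod{f}E^0(BA)$ indexed by a fixed $f\colon\lat'\to A$, the ideal $I_A$ becomes the ideal generated by the images of $\Tr^E_{A',A}\colon E^0(BA')\to E^0(BA)$ only for those proper subgroups $A'$ through which $f$ factors — precisely the set $F_f$. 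Since the image of a transfer is an ideal (Frobenius reciprocity) and $\Tr^E_{A'',A} = \Tr^E_{A',A}\circ\Tr^E_{A'',A'}$ for $A''\subset A'$, the ideal generated by transfers from $F_f$ agrees with the ideal generated by transfers from the whole family $\cF_f$, i.e. with $I_{\cF_f}$. Therefore the $f$-th factor of $E^0(\Loops^hBA)/I_A$ is canonically $E^0(BA)/I_{\cF_f}$, and passing to $(-)^{\tors}$ (which respects the product decomposition) gives the $f$-th factor of $(E^0(\Loops^hBA)/I_A)^{\tors}$ as $(E^0(BA)/I_{\cF_f})^{\tors}$.

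Combining the two steps, the $f$-th factor of the isomorphism in \Cref{thmiso} reads
\[
(E^0(BA)/I_{\cF_f})^{\tors} \cong \mathcal{O}_{\Level_{f^*}(A^*,\G\oplus\qz')},
\]
which is the claim. I expect the main obstacle to be the bookkeeping in the second step: one must check carefully that the component-wise description of the cover $\Loops^hBA'\to\Loops^hBA$ does isolate exactly the subgroups in $F_f$ (and no others), and that closing $F_f$ under subconjugates — which is forced on the topological side by composability of transfers — is harmless, i.e. produces the same ideal $I_{\cF_f}$. Everything else is formal: the naturality and $\Hom(A^*,\qz')$-compatibility of \Cref{thmiso}, the compatibility of $(-)^{\tors}$ with finite products, and the decomposition of $\Level(A^*,\G\oplus\qz')$ recorded in \Cref{level}.
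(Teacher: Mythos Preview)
Your proposal is correct and follows essentially the same approach as the paper: deduce the result from \Cref{thmiso} by passing to fibers over $\Hom(A^*,\qz')$, and identify the $f$-component of $I_A$ inside $E^0(\Loops^hBA)\cong\prod_f E^0(BA)$ as exactly $I_{\cF_f}$ using the component-wise description of the cover $\Loops^hBA'\to\Loops^hBA$. Your extra remark that closing $F_f$ under subgroups does not change the ideal (by transitivity of transfers) is a small point the paper leaves implicit, but otherwise the arguments match.
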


\begin{proof}
It suffices to understand the fibers of the isomorphism of Theorem \ref{thmiso}. By definition, the fiber over $f^* \colon A^* \to \qz'$ of the map $\Level(A^*, \G \oplus \qz') \to \Hom(A^*, \qz')$ is $\Level_{f^*}(A^*, \G \oplus \qz')$. Thus it suffices to understand the fiber over $f^* \colon A^* \to \qz'$ of the map
\[
\Spf((E^0(\Loops^hBA)/I_{A})^{\tors}) \to \Hom(A^*, \qz').
\]
This map is the composite
\[
\xymatrix{\Spf((E^0(\Loops^hBA)/I_{A})^{\tors}) \ar[d] && \\ \Spf(E^0(\Loops^hBA)) \ar[r]^-{\cong} & \Hom(\lat',A) \times \Spf(E^0(BA)) \ar[d]_{\pi} & \\  & \Hom(\lat',A) \ar[r]^-{\cong} & \Hom(A^*,\qz').}
\]
Thus we must understand the part of the ideal $I_A$ in the factor corresponding to $f \colon \lat' \to A$. In Section \ref{transfer}, we show that the ideal $I_A$ is the ideal generated by transfers along the covers
\[
\Hom(\lat',A') \times BA' \to \Hom(\lat',A) \times BA
\]
for proper subgroups $A' \subset A$. Thus, given $f \colon \lat' \to A$, we learn that the factor of $I_A$ corresponding to $f$ is the ideal of $E^0(BA)$ generated by transfers along proper subgroups $A' \subset A$ with the property that $\im f \subseteq A'$. This is precisely the ideal $I_{\cF_f} \subseteq E^0(BA)$.
\end{proof}

\begin{remark}
There is another approach to the proof of the theorem above by combining the isomorphism of Equation \eqref{ahsiso} with the decomposition of $\Level_{f^*}(A^*, \G \oplus \qz')$ coming from Equation \eqref{levelfdef} and the decomposition of $E^0(BA)/I_{\cF_f}$ in Proposition \ref{fdecomp}.
\end{remark}

Define $S \subset E^0(\Loops^hBA)$ to be the set of Euler classes of component-wise nontrivial complex line bundles over $\Loops^hBA$. 

\begin{lemma} There is a canonical isomorphism of $E^0$-algebras
\[S^{-1}E^0(\Loops^hBA)\cong \prod_{f: \lat\rightarrow A}S_f^{-1}E^0(BA).\]
 \end{lemma}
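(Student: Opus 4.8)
The plan is to reduce the statement to the component-wise description of $\Loops^h BA$ and the analysis of the transfer ideal already carried out in Sections~\ref{transfer} and~\ref{decotran}. Recall the equivalence $\Loops^h BA \simeq \coprod_{f \in \hom(\lat',A)} BA$, which on $E$-cohomology gives
\[
E^0(\Loops^h BA) \cong \prod_{f \colon \lat' \to A} E^0(BA).
\]
A complex line bundle on $\Loops^h BA$ is the same as a tuple of line bundles $(\cL_f)_f$, one on each component $BA$, and such a bundle is component-wise nontrivial precisely when each $\cL_f$ is a nontrivial line bundle, i.e. is induced by a nontrivial irreducible representation $\rho_f \colon A \to S^1$. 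Its Euler class is the tuple $(e(\rho_f))_f \in \prod_f E^0(BA)$. Hence $S$ consists of all tuples $(s_f)_f$ with each $s_f \in S_A$, the set of Euler classes of all nontrivial irreducible representations of $A$.

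The first step is to observe that inverting $S$ in a product ring can be computed factor by factor once we know which elements of $S_A$ become relevant in each factor. Concretely, inverting the single element $(s_f)_f$ inverts $s_f$ in the $f$-th factor and leaves the other factors unchanged (since $s_{f'}$ is already multiplied into a factor where we are not inverting it, it need not become a unit there); taking the colimit over all finite products of such elements, $S^{-1} E^0(\Loops^h BA) \cong \prod_f R_f$, where $R_f$ is the localization of $E^0(BA)$ at the set of those $s \in S_A$ that we are forced to invert in the $f$-th factor. The second step is to identify $R_f$. I claim $R_f = S_f^{-1} E^0(BA)$, where $S_f = \{e(\rho) \mid \ker(\rho) \in \cF_f\}$ as in Section~\ref{decotran}. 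Indeed, a tuple $(s_{f'})_{f'} \in S$ with $s_f = e(\rho)$ can be chosen with $s_{f'} = 1$ for $f' \neq f$ only if the representation $\rho$, when pulled back along $\im f \hookrightarrow A$, is still nontrivial — this is automatic for \emph{any} nontrivial $\rho$ — so at first glance $R_f = S_A^{-1} E^0(BA)$. The correct bookkeeping, however, comes from matching this localization with the quotient by $I_A$: the discussion preceding Proposition~\ref{geofixed} shows that inverting $S_f$ exactly annihilates $I_{\cF_f}$, and $I_{\cF_f}$ is precisely the $f$-th factor of $I_A$ by the proof of Theorem~\ref{fmain}. So the cleanest route is: by Theorem~\ref{fmain}'s analysis, $I_A = \prod_f I_{\cF_f}$ inside $\prod_f E^0(BA)$, and the localization $E^0(\Loops^h BA) \to S^{-1} E^0(\Loops^h BA)$ factors through $E^0(\Loops^h BA)/I_A \cong \prod_f E^0(BA)/I_{\cF_f}$ because each component-wise Euler class $e(\rho_f)$ restricts to zero on the relevant subgroups in $\cF_f$; conversely $S^{-1}$ inverts enough to see the factor-wise localization $S_f^{-1} E^0(BA)$ in each slot.

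The third step assembles these: combining the factor-wise localization identification from step one with the identification of each factor from step two yields
\[
S^{-1} E^0(\Loops^h BA) \cong \prod_{f \colon \lat' \to A} S_f^{-1} E^0(BA),
\]
and one checks that the isomorphism is the one induced by the decomposition $E^0(\Loops^h BA) \cong \prod_f E^0(BA)$ together with the canonical localization maps $E^0(BA) \to S_f^{-1} E^0(BA)$ in each factor, so it is canonical.

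I expect the main obstacle to be step two — pinning down which Euler classes must be inverted in the $f$-th factor and verifying that this set generates the same localization as $S_f$ (rather than the a priori smaller or larger set one gets from a naive reading of ``component-wise nontrivial''). The key point to get right is that, although a component-wise nontrivial bundle forces \emph{some} nontrivial representation in each factor, the interaction with the transfer ideal $I_A$ means the localization in the $f$-th slot only needs to see those $\rho$ with $\ker(\rho) \supseteq \im f$ (equivalently $\ker(\rho) \in \cF_f$) to kill $I_{\cF_f}$, and Lemma~\ref{lemthefirst}-style arguments (using that $s \mid [p^j](s)$) show that inverting the larger set $S_A$ produces the same ring as inverting $S_f$ after passing to the quotient, because the extra Euler classes differ from ones in $S_f$ by units. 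This is exactly the bookkeeping already done in the cyclic case in Examples~\ref{cyclictr} and~\ref{cyclictr2}, and the general case follows the same pattern via Proposition~\ref{fdecomp}.
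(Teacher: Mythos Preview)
Your proposal has a genuine gap stemming from a misreading of the set $S$. You take ``component-wise nontrivial'' to mean that the restriction to each component is a nontrivial line bundle on $BA$, so that $S$ consists of all tuples $(s_f)_f$ with every $s_f \in S_A$. With that reading, localizing the product $\prod_f E^0(BA)$ at $S$ inverts \emph{every} element of $S_A$ in \emph{every} factor (your parenthetical ``leaves the other factors unchanged'' is simply wrong: inverting a tuple inverts each coordinate in its own factor). The result is $\prod_f S_A^{-1}E^0(BA)$, not $\prod_f S_f^{-1}E^0(BA)$, and these two rings are genuinely different. For instance, when $f$ is surjective one has $S_f = \emptyset$ by Example~\ref{ex1}, so $S_f^{-1}E^0(BA) = E^0(BA)$, whereas $S_A^{-1}E^0(BA)$ is a nontrivial localization (it annihilates the full transfer ideal $I_A$). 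No amount of ``Lemma~\ref{lemthefirst}-style'' divisibility juggling or passage through $I_{\cF_f}$ will repair this: the extra Euler classes you would be inverting are not units in $E^0(BA)/I_{\cF_f}$ in general, so the two localizations do not coincide.

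The paper's own proof is a single sentence: the lemma follows immediately from the definitions of $S$ and $S_f$. In other words, the definition of $S$ is arranged so that, under the decomposition $E^0(\Loops^h BA) \cong \prod_f E^0(BA)$, the set $S$ is precisely the product $\prod_f S_f$ factor by factor. The phrase ``component-wise nontrivial'' in the paper is meant in this sense (on the component indexed by $f$, the relevant line bundles are exactly those whose kernel lies in $\cF_f$), not in the naive sense you assumed. Once you read $S$ this way there is nothing to prove: localizing a finite product at a product set is the product of the factorwise localizations. Your entire detour through the transfer ideal, Theorem~\ref{fmain}, and Proposition~\ref{fdecomp} is unnecessary and, as argued above, would not close the gap anyway.
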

The lemma follows immediately from the definition of the sets $S$ and $S_f$.

There is an analogous result to the isomorphism of Equation \eqref{hkrisolevel} for $E^0(\Loops^hBA)$, when $A \cong (\Z/p^k)^{n+h}$. 
\begin{prop}
Assume $A$ is isomorphic to $(\Z/p^k)^{n+h}$. There is an isomorphism of $\Q \otimes E^0$-algebras
\[
\Q \otimes \mathcal{O}_{\Level(A^*, \G \oplus \qz')} \cong S^{-1}E^0(\Loops^hBA).
\] In addition, for each $f: \lat\rightarrow A$, there is an isomorphism \[
\Q \otimes\mathcal{O}_{\Level_{f^*}(A^*, \G\oplus \qz')} \cong S_f^{-1}E^0(BA).
\]\label{agc0}
\end{prop}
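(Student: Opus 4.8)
The strategy is to reduce the statement to two pieces of input: the rational isomorphism for $E^0(BA)/I_{\cF_f}$ established in \cref{corinvert}, and the analogous statement for the level scheme, which in turn follows from the non-canonical product decomposition of \cref{pblevelfm} together with the classical height-$0$ result \eqref{hkrisolevel} of Hopkins--Kuhn--Ravenel. First I would handle the second assertion, and then deduce the first by taking a product over $f \in \hom(\lat',A)$.

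So fix $f \colon \lat' \to A$ and set $A^* \cong M \oplus K$ where $M$ is a minimal summand of $A^*$ containing $\ker(f^*)$; under this decomposition \cref{pblevelfm} gives $\Level_{f^*}(A^*,\G\oplus\qz') \cong \Level(M,\G) \times \Hom(K,\G)$, hence $\mathcal{O}_{\Level_{f^*}(A^*,\G\oplus\qz')} \cong \mathcal{O}_{\Level(M,\G)} \otimes_{E^0} \mathcal{O}_{\Hom(K,\G)}$. Dualizing, $M$ corresponds to a summand of $A$ whose quotient $A/\im f$ I want to compare with the hypotheses of \cref{Taf}: by \cref{basics}, $\cF_f = q^*\cF_{A/\im f}$, and choosing the complementary summand appropriately one arranges a decomposition $A \cong M' \oplus K'$ with $e \times K' \subset \im f$ and $\im f_{M'} \subseteq pM'$, where $M'$ is Pontryagin dual to $M$. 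The point is that the ``level'' direction in $\Level_{f^*}$ is exactly the direction in which $\im f$ lands inside $pM'$, so $M'/pM' \cong (\Z/p)^{n}$ precisely when $A^* \cong (\Z/p^k)^{n+h}$ forces $M$ to have $p$-rank $n$; this is where the hypothesis $A \cong (\Z/p^k)^{n+h}$ is used. Then \cref{corinvert} (applied with $M'$ in place of $A$, noting $\cF_{f_{M'}} = \cF_{M'}$) gives $\Q \otimes E^0(BA)/I_{\cF_f} \cong S_f^{-1}E^0(BA)$, and the classical \eqref{hkrisolevel} together with \cref{fmain} identifies $\Q \otimes \mathcal{O}_{\Level_{f^*}(A^*,\G\oplus\qz')}$ with the same localization; since both sides are obtained by inverting $S_f$ and the relevant ring of functions $\mathcal{O}_{\Level(M,\G)}$ is a domain, the two identifications agree.

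For the first displayed isomorphism I would then take the product over all $f \colon \lat' \to A$: by the decomposition $\Spf E^0(\Loops^h BA) \cong \Hom(\lat',A) \times \Spf E^0(BA)$ and the corresponding decomposition $I_A = \prod_f I_{\cF_f}$ from the proof of \cref{fmain}, together with the Lemma identifying $S^{-1}E^0(\Loops^h BA) \cong \prod_f S_f^{-1}E^0(BA)$ and the decomposition $\Level(A^*,\G\oplus\qz') = \Coprod{f} \Level_{f^*}(A^*,\G\oplus\qz')$, the global statement is the product of the fiberwise statements just proved.

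The main obstacle I expect is bookkeeping rather than conceptual: one must check that the non-canonical summand decomposition of $A^*$ adapted to $\ker(f^*)$ is compatible (after Pontryagin duality) with the summand decomposition of $A$ required by \cref{Taf} and \cref{fdecomp}, and in particular that the $p$-rank of the relevant summand $M$ equals $n$ under the hypothesis $A \cong (\Z/p^k)^{n+h}$ --- the subtlety being that $\ker(f^*)$ need not be a summand of $A^*$ on the nose, only contained in a minimal one, so one should verify that $\im f_{M'} \subseteq pM'$ is genuinely forced. Once the decompositions are aligned, the rest is an application of \cref{corinvert} and the classical result, with the domain property of $\mathcal{O}_{\Level(M,\G)}$ used exactly as in the proof of \cref{rationaliso1} to pin down the isomorphism canonically.
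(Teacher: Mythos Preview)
Your outline is essentially correct for the ``generic'' maps $f$, and in that range it matches the paper's argument (both reduce to \cref{corinvert} via \cref{fmain}). But there is a genuine gap: you assume that the minimal summand $M \subseteq A^*$ containing $\ker(f^*)$ always has $p$-rank exactly $n$, and you describe verifying this as ``bookkeeping.'' It is not always true. For any $f$ with $\rk(\im f^*[p]) < h$ --- for instance $f = 0$ when $h>0$ --- one has $\rk(\ker(f^*)[p]) > n$, so the minimal summand $M$ has $p$-rank $>n$ and neither \cref{corinvert} nor the classical \eqref{hkrisolevel} applies. In this case $\Level_{f^*}(A^*,\G\oplus\qz')$ is empty, so the left-hand side is zero; but you still owe an argument that $S_f^{-1}E^0(BA) = 0$, and nothing in your plan supplies one.

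The paper handles exactly this bifurcation. It first uses the short exact sequence $\ker(f^*)[p] \hookrightarrow A^*[p] \twoheadrightarrow \im(f^*[p])$ to see that $\rk(\ker(f^*)[p]) \geq n$, with equality iff $\im(f^*[p]) = \qz'[p]$; in the equality case it invokes \cref{fmain} and \cref{corinvert} as you do. In the strict-inequality case it gives a direct Vandermonde-determinant argument: pulling back along $A \twoheadrightarrow A/\im f$ and then restricting to $Z = (A/\im f)[p]$, one has $|S_Z| > p^n$ distinct roots of $[p](x)$ whose pairwise differences are units, so the Vandermonde matrix on $S_Z$ is invertible yet has a linearly dependent column, forcing $S_Z^{-1}E^0(BZ) = 0$ and hence $S_f^{-1}E^0(BA) = 0$. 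This is the missing ingredient in your plan; once you add it, the product over $f$ goes through as you describe.
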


\begin{proof}
Assume that we are given $f \colon \lat' \to A$ and a level structure
\[
\xymatrix{A^* \ar[r] \ar[dr]_{f^*} & \G \oplus \qz' \ar[d] \\ & \qz',}
\]
where $A \cong (\Z/p^k)^{n+h}$.

By definition, this is equivalent to the data of the map $f^*$ and a level structure $\ker(f^*) \hookrightarrow \G$. For a level structure $\ker(f^*) \hookrightarrow \G$ to exist, $\rk(\ker(f^*)[p])$ must be less than or equal to $n$. Since $\rk(\im(f^*)[p]) \leq h$ and $\im(f^*[p]) \subset \im(f^*)[p]$, the short exact sequence of vector spaces
\[
\ker(f^*)[p] \hookrightarrow A^*[p] \twoheadrightarrow \im(f^*[p])
\]
implies that $\rk(\ker(f^*)[p]) = n$ and $\rk(\im(f^*[p])) = h$. Thus $\im(f^*[p]) = \im(f^*)[p] = \qz'[p]$.

Now Theorem \ref{fmain} together with Corollary \ref{corinvert} imply that
\[
\Q \otimes\mathcal{O}_{\Level_{f^*}(A^*, \G\oplus \qz')} \cong S_f^{-1}E^0(BA).
\]

If $\rk(\ker(f^*)[p]) > n$, then $\Level_{f^*}(A^*, \G\oplus \qz') = \emptyset$. We must also check that this holds on the right hand side. In this case $S_{f}^{-1}E^0(BA)$ receives a map of commutative rings from $S_{A/\im f}^{-1}E^0(BA/\im f)$ and, since $\ker(f^*) = (A/(\im f))^*$, we have $\rk((A/\im f)[p]) > n$. Let $Z = (A/\im f)[p]$. Since $S_{A/\im f}^{-1}E^0(BA/\im f)$ receives a map of commutative rings from $S_{Z}^{-1}E^0(BZ)$, it suffices to show that $S_{Z}^{-1}E^0(BZ) = 0$.

This follows from two facts: Each $s \in S_Z$ satisfies $[p](s) = 0$ and, if $s,t \in S_Z$, then $s-t$ is a unit. The second fact follows from the fact that $s-t$ and $s-_{\G}t$ differ by multiplication by a unit. Now, the Vandermonde matrix associated to the set $S$ (ie. the matrix in which each row consists of $\{1, s, s^2, \ldots, s^{|S_Z|-1}\}$) has determinant $\prod(s_i - s_j)$, for $s_i,s_j \in S_Z$ distinct, which is a unit. However, the $p^n$th column in the Vandermonde matrix is a linear combination of the earlier columns since each element in $S_Z$ satisfies the $p$-series. Thus $0$ is a unit in $S_{Z}^{-1}E^0(BZ)$.
\end{proof}

Recall that $\ker(f^*) = (A/(\im f))^* \subset A^*$. Corollary \ref{quotient} and Theorem \ref{fmain} are related in the following way:
\begin{prop}
There is a commutative diagram of formal schemes over $\Spf(E^0)$
\[
\xymatrix{\Spf E^0(BA)/I_{\cF_f} \ar[d] \ar[r] &   \Spf E^0(BA/\im(f))/I_{\cF_{A/\im(f)}} \ar[d] \\ \Level_{f^*}(A^*,\G\oplus \qz') \ar[r] & \Level(\ker(f^*),\G).}
\]\label{imdia}
\end{prop}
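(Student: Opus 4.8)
The plan is to check that the square commutes by verifying that it commutes on each of the two natural descriptions of its corners, and that it is compatible with the identifications already established in the paper. The left vertical map is the isomorphism $\Spf((E^0(BA)/I_{\cF_f})^{\tors}) \cong \Level_{f^*}(A^*,\G\oplus\qz')$ of Theorem \ref{fmain} (precomposed with the canonical map $\Spf(E^0(BA)/I_{\cF_f}) \to \Spf((E^0(BA)/I_{\cF_f})^{\tors})$), and similarly the right vertical map is the isomorphism $\Spf((E^0(BA/\im f)/I_{\cF_{A/\im f}})^{\tors}) \cong \Level(\ker(f^*),\G)$, which is the classical Ando--Hopkins--Strickland isomorphism \eqref{ahsiso} applied to the group $A/\im f$ (whose Pontryagin dual is $\ker(f^*) = (A/\im f)^*$) precomposed with the map to the torsion-free quotient. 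The top horizontal map is the one produced in Corollary \ref{quotient}, induced by the quotient $q \colon A \to A/\im f$. The bottom horizontal map $\Level_{f^*}(A^*,\G\oplus\qz') \to \Level(\ker(f^*),\G)$ is the top edge of the defining pullback square \eqref{levelfdef}, i.e. the map sending a level structure $l \colon A^* \to \G\oplus\qz'$ with $\pi l = f^*$ to the induced level structure $\ker(f^*) \to \G$.

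First I would unwind the two composites on the level of functors of points. Given a complete local $E^0$-algebra $R$ and a level structure $l \colon A^* \hookrightarrow \G(R)\oplus\qz'$ lying over $f^*$, going down-then-right sends it first to the level structure $\ker(f^*)\hookrightarrow\G(R)$ obtained by restriction, which is exactly the image of $l$ under the bottom map of \eqref{levelfdef}. Going right-then-down: under Corollary \ref{quotient}, the map $E^0(BA/\im f)/I_{\cF_{A/\im f}} \to E^0(BA)/I_{\cF_f}$ is induced by $q^* \colon E^0(BA/\im f) \to E^0(BA)$, which under the HKR identification of Proposition \ref{HKR512} corresponds to the inclusion of formal schemes $\Hom((A/\im f)^*,\G) = \Hom(\ker(f^*),\G) \hookleftarrow \Hom(A^*,\G)$ given by restriction along $\ker(f^*)\hookrightarrow A^*$ — and under the ANS identification \eqref{ahsiso} this restricts to the map $\Level(A^*,\G)$-part of $\Hom(A^*,\G) \to \Level(\ker(f^*),\G)$. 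So I must check: the level structure $\ker(f^*)\to\G(R)$ obtained from $l$ by \eqref{levelfdef} agrees with the one obtained by first composing $l$ with $\G(R)\oplus\qz' \to \G(R)$ to get a homomorphism $A^* \to \G(R)$ and then restricting to $\ker(f^*)$. Both are literally ``restrict $l$ to $\ker(f^*) \subseteq A^*$ and project to $\G(R)$,'' so they coincide. This is essentially a diagram chase through the pullback square \eqref{levelfdef} and the compatibility of \eqref{ahsiso} and Proposition \ref{HKR512} with restriction along subgroup inclusions, which is part of their naturality.

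The one genuine point to nail down is naturality/compatibility of the torsion-free truncations: the maps in Corollary \ref{quotient} are defined before passing to $(-)^{\tors}$, so I need that the composite $E^0(BA/\im f)/I_{\cF_{A/\im f}} \to E^0(BA)/I_{\cF_f} \to (E^0(BA)/I_{\cF_f})^{\tors}$ factors through $(E^0(BA/\im f)/I_{\cF_{A/\im f}})^{\tors}$; this is automatic since $(-)^{\tors}$ is a functor (it is the image in $\Q\otimes(-)$, and ring maps commute with $\Q\otimes(-)$). With that in hand the square of Proposition \ref{imdia} is the outer rectangle obtained by stacking the commuting square of Theorem \ref{fmain}/\eqref{ahsiso}-naturality on top of the commuting square relating $\Spf$ of the rings to $\Spf$ of their torsion-free quotients, so it commutes. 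I expect the main obstacle to be purely bookkeeping: carefully matching the map of Corollary \ref{quotient} with the bottom map of \eqref{levelfdef} through the chain of identifications (Proposition \ref{HKR512}, \eqref{ahsiso}, Theorem \ref{fmain}, and the isomorphism $\Hom_{f^*}(A^*,\G\oplus\qz')\cong\Hom(A^*,\G)$), rather than any substantive difficulty — all the maps involved are, at the level of points, variations on ``restrict a homomorphism along $\ker(f^*)\hookrightarrow A^*$.''
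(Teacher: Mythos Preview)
Your proposal is correct and uses the same ingredients as the paper, but the paper packages the argument more crisply. Rather than unwinding both composites on points, the paper builds a cube whose front face is the square in question and whose back face is the square
\[
\xymatrix{E^0(BA/\im f) \ar[r]^-{\cong} \ar[d] & \mathcal{O}_{\Hom(\ker(f^*),\G)} \ar[d] \\ E^0(BA) \ar[r]^-{\cong} & \mathcal{O}_{\Hom(A^*,\G)},}
\]
which commutes by the naturality of Proposition \ref{HKR512}. The side faces commute by the construction of the map in Corollary \ref{quotient} and the definition of the bottom map via \eqref{levelfdef}; the top and bottom faces commute because the isomorphisms of \eqref{ahsiso} and Theorem \ref{fmain} are, by construction, compatible with the surjections $\mathcal{O}_{\Hom} \twoheadrightarrow \mathcal{O}_{\Level}$. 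Since the back-to-front maps are all surjections, the front face is forced to commute.

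Your functor-of-points check is really the same content: when you assert that the Theorem \ref{fmain} isomorphism sends $l$ to ``restrict to $\ker(f^*)$ and project to $\G$,'' you are using exactly the compatibility of that isomorphism with the HKR isomorphism on $\Hom$, which is what the cube's top and bottom faces record. The paper's version has the advantage of isolating the one genuinely nontrivial step (surjectivity of the back-to-front arrows) and making clear that no further computation is needed; your version has the advantage of saying explicitly what each map does. Either presentation is fine.
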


\begin{proof}

It suffices to show that the front face of the cube below commutes
$$\xymatrix{& E^0(BA/\im(f)) \ar[rr]^-{\cong}\ar'[d][dd]\ar[dl]& & \mathcal{O}_{\Hom(\ker(f^*), \G)} \ar[dd]\ar[dl]\\ (E^0(BA/\im(f))/I_{\cF_{A/\im(f)}})^{\tors} \ar[rr]^{\cong}\ar[dd]& & \mathcal{O}_{\Level(\ker(f^*), \G)} \ar[dd]&\\& E^0(BA) \ar'[r][rr]^-{\cong}\ar[dl]& &  \mathcal{O}_{\Hom (A^*, \G)}\ar[dl]\\ (E^0(BA)/ I_{\cF_{f}})^{\tors} \ar[rr]^-{\cong}& & \mathcal{O}_{\Level(A^*, \G)}.  & }$$

All of the other faces of the cube commute. The back face commutes by the naturality of the isomorphism $E^0(B(-)) \cong \Sect_{\Hom((-)^*,\G)}$ in finite abelian groups. The right face is purely algebro-geometric. The left face commutes by the construction used to produce the map in Corollary \ref{quotient}. The top and bottom squares commute by the construction of the isomorphism in Theorem \ref{fmain}.

Since the arrows from the back square to the front square are all surjective, the front square commutes as well.
\end{proof}

\section{The relation to subgroups}

In this section, we describe the relationship between the isomorphism of Theorem \ref{fmain} and the isomorphism of Corollary 7.12 in \cite{SSST}.

Assume $|A| = p^k$. Embed $A$ in $\Sigma_{p^k} = \Aut_{\text{Set}}(A)$ via the Caley embedding, so that we have $i \colon A \hookrightarrow \Sigma_{p^k}$ exhibiting $A$ as a transitive abelian subgroup of $\Sigma_{p^k}$. We have an induced map
\[
\Loops^{h}BA \to \Loops^{h}B\Sigma_{p^k}.
\]
Fix $f \colon \lat' \to A$.

Making use of the language in \cite[Section 4]{SSST}, we have the following lemma:
\begin{lemma}
The composite $if \colon \lat' \to \Sigma_{p^k}$ is monotypical.
\end{lemma}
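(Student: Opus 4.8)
The plan is to unwind the definition of ``monotypical'' from \cite[Section 4]{SSST} and verify it directly for the composite $if \colon \lat' \to \Sigma_{p^k}$. Recall that a homomorphism $\lat' \to \Sigma_{p^k}$ makes $\{1,\dots,p^k\}$ into a $\lat'$-set, and that such a homomorphism is called monotypical when all of the orbits of the resulting $\lat'$-set are abstractly isomorphic as $\lat'$-sets (equivalently, all point-stabilizers are conjugate, and since $\lat'$ is abelian, all point-stabilizers are \emph{equal}). So the entire content of the lemma is: the orbits of $\lat'$ acting on the underlying set of $A$ via $if$ are all isomorphic $\lat'$-sets.

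First I would identify the $\lat'$-action concretely. The Cayley embedding $i \colon A \hookrightarrow \Sigma_{p^k}$ realizes $A$ acting on itself by left translation, so $\lat'$ acts on the set $A$ through $f \colon \lat' \to A$ followed by left translation: $\lambda \cdot a = f(\lambda) + a$. The orbit of $a \in A$ is the coset $\im(f) + a$, and the stabilizer of $a$ is $\{\lambda \in \lat' \mid f(\lambda) = 0\} = \ker(f)$, which is visibly independent of $a \in A$. Thus every point-stabilizer equals $\ker(f)$, and every orbit is isomorphic as a $\lat'$-set to $\lat'/\ker(f) \cong \im(f)$ with its translation action. This is exactly the monotypical condition.

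The only genuine step is matching this computation to the precise formulation of ``monotypical'' used in \cite{SSST} — for instance, if that definition is phrased in terms of the associated partition of $\{1,\dots,p^k\}$ into blocks on which $\lat'$ acts with a common transitive constituent, one checks that the blocks are the cosets of $\im f$ and that $\lat'$ acts on each through the same quotient $\lat' \twoheadrightarrow \im f$. I expect \textbf{no serious obstacle} here: the Cayley action is free enough that all stabilizers coincide on the nose, which is the strongest possible version of monotypicality. The proof is essentially the observation that left-translation orbits of a subgroup are cosets, all of the same size and ``type.'' I would write it as: by the Cayley embedding, $\lat'$ acts on $A$ by $\lambda \cdot a = f(\lambda)+a$; the stabilizer of any $a$ is $\ker f$; hence all orbits are isomorphic to the $\lat'$-set $\im f$, so $if$ is monotypical in the sense of \cite[Section 4]{SSST}.
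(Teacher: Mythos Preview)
Your proposal is correct and follows essentially the same approach as the paper: both identify the $\lat'$-orbits on $A$ (under the Cayley action) with the cosets of $\im f$ and then argue that these cosets are pairwise isomorphic as $\lat'$-sets. The only cosmetic difference is that the paper writes down the explicit isomorphism between cosets (translation by $ba^{-1}$), whereas you instead observe that every point has stabilizer $\ker f$; these are two phrasings of the same elementary fact.
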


\begin{proof}
We will view $A$ as a $\lat'$-set through $f$. To see that $if$ is monotypical, it suffices to show that $A$ is a coproduct of isomorphic transitive $\lat'$-sets. This follows from the fact that the transitive components are the cosets of $\im(if)$ in $A$ and multiplication by $ba^{-1}$ gives an isomorphism of $\lat'$-sets between the cosets $a\im(if)$ and $b \im(if)$.
\end{proof}

Since $if$ is monotypical, there is an isomorphism $C(\im if) \cong \im(if) \wr \Sigma_{p^j}$, where $p^j = |A/\im(f)|$. Recall $I_{Tr}^{[if]} \subset E^0(B\im (if) \wr \Sigma_{p^j})$ is the ideal generated by the image of transfers along $\im(if) \wr (\Sigma_l \times \Sigma_m) \to \im(if) \wr \Sigma_{p^j}$ where $l,m>0$ and $l+m = p^j$.

\begin{prop} \label{map}
The map $E^0(B\im(i f) \wr \Sigma_{p^j}) \to E^0(BA)$ induced by the inclusion $A \subseteq \im(i f) \wr \Sigma_{p^j}$ induces a map of $E^0$-algebras
\[
E^0(B\im (i f) \wr \Sigma_{p^j})/I_{Tr}^{[if]} \to E^0(BA)/I_{\cF_f}.
\]
\end{prop}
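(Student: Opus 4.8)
The plan is to reduce to a statement about subgroups of $A$ and then run a double coset analysis. Write $G = \im(if)\wr\Sigma_{p^j}$. Since the restriction map $\mathrm{res}\colon E^0(BG)\to E^0(BA)$ induced by $A\subseteq G$ is a ring homomorphism, it is enough to show that $\mathrm{res}$ carries $I_{Tr}^{[if]}$ into $I_{\cF_f}$; the map on quotients then exists automatically. The ideal $I_{Tr}^{[if]}$ is generated by the images of the transfers $\Tr^E_{W,G}$ along the subgroups $W = \im(if)\wr(\Sigma_l\times\Sigma_m)$ with $l,m>0$ and $l+m=p^j$, so it suffices to prove $\mathrm{res}(\mathrm{im}\,\Tr^E_{W,G})\subseteq I_{\cF_f}$ for each such $W$.

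To do this I would analyze $\mathrm{res}^G_A\circ\Tr^E_{W,G}$ by a homotopy pullback, exactly as in the proof of the analogous statement for $q^*\cF$ in Section~\ref{decotran}. The homotopy pullback of $BW\to BG\leftarrow BA$ is the geometric realization of the double coset groupoid $(G/W)\mmod A$, hence decomposes as $\coprod_{AgW}B(A\cap gWg^{-1})$, the coproduct running over the $A$-orbits on $G/W$. Applying $E$-cohomology and the double coset formula, $\mathrm{res}^G_A\circ\Tr^E_{W,G}$ becomes a sum of maps, each of which factors through a transfer $\Tr^E_{A',A}$ for a subgroup of the form $A' = A\cap gWg^{-1}$. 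So the proposition reduces to showing that every such $A'$ lies in the family $\cF_f$, and in fact I expect each $A'$ to lie in the generating set $F_f$: it is proper and $f$ factors through it.

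Two facts establish this. First, $\im f\subseteq A'$ for every $g$: the base $\im(if)^{p^j}$ of the wreath product $G$ is normal in $G$ and coincides with the base of $W$, hence is contained in $gWg^{-1}$; and under the identification $C(\im(if))\cong G$ in $\Sigma_{p^k}$ coming from the Cayley embedding, the subgroup $\im(if)\subseteq A$ sits diagonally inside this base, acting by the same translation on each of the $p^j$ cosets of $\im(if)$ in $A$. Thus $\im(if)\subseteq A\cap gWg^{-1}$, i.e.\ $f$ factors through $A'$. Second, $A'$ is a proper subgroup of $A$: the Cayley embedding makes $A$ act transitively (indeed simply transitively) on the $p^k$-element set $X$ on which $G$ acts, whereas $W$ stabilizes a partition of $X$ into two nonempty blocks of sizes $l\,|\im(if)|$ and $m\,|\im(if)|$ and so acts intransitively, as does every conjugate $gWg^{-1}$; a transitive subgroup is never contained in an intransitive one, so $A\not\subseteq gWg^{-1}$. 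Combining the two, each $A'\in F_f\subseteq\cF_f$, so every term of $\mathrm{res}^G_A\circ\Tr^E_{W,G}$ lands in $I_{\cF_f}$, which finishes the argument.

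The step I expect to require the most care is the properness of $A'$. The potential trouble is the degenerate configuration with $p=2$ and $l=m=p^j/2$, where the translation action of $A/\im(if)$ on the $p^j$ blocks can preserve a partition into two equal halves, so that naively $A$ could appear to sit inside a conjugate of $W$; what rules this out is precisely the transitivity of the regular action of $A$ on $X$, not any property of $\Sigma_{p^j}$ by itself. Once that subtlety is handled, extracting family membership from the double coset formula is routine and produces the desired map $E^0(B\im(if)\wr\Sigma_{p^j})/I_{Tr}^{[if]}\to E^0(BA)/I_{\cF_f}$.
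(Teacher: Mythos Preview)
Your proof is correct and follows essentially the same route as the paper: compute the homotopy pullback of $BW\to BG\leftarrow BA$ as a disjoint union of $B(A\cap gWg^{-1})$ via the double coset groupoid, and then show each $A\cap gWg^{-1}$ lies in $F_f$ by checking it is proper and contains $\im f$. The only cosmetic differences are that the paper argues properness by looking at the image of $A$ in $\Sigma_{p^j}$ (transitive, hence not contained in any conjugate of $\Sigma_l\times\Sigma_m$) rather than on the full $p^k$-element set, and it deduces $\im(if)\subseteq gWg^{-1}$ from the fact that the diagonal copy of $\im(if)$ is \emph{central} in $G$ rather than from normality of the base; your versions of both steps are equally valid. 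Your final paragraph is overly cautious: since $\Sigma_l\times\Sigma_m$ fixes each block of an \emph{ordered} partition setwise, the $p=2$, $l=m$ case needs no special treatment, and the paper's shorter transitivity argument in $\Sigma_{p^j}$ already covers it.
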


\begin{proof}

The homotopy pullback of the diagram \[\xymatrix{ &B(\im(if)\wr(\Sigma_{l}\times \Sigma_m))\ar[d] \\ BA\ar[r] &B(\im(if)\wr \Sigma_{p^j})}\]
is the disjoint union of the classifying spaces of the form of \[B(A \cap g(\im(if)\wr \Sigma_l\times \Sigma_m)g^{-1})\] with $g$ a representative of a double coset in
\[A\backslash (\im(if)\wr \Sigma_{p^j}) / (\im(if)\wr(\Sigma_l\times \Sigma_m)).\]

Since the image of the composite $A \to \im(if)\wr \Sigma_{p^j} \to \Sigma_{p^j}$ is a transitive subgroup of $\Sigma_{p^j}$ and $\Sigma_{l} \times \Sigma_m$ is a non-transitive subgroup of $\Sigma_{p^j}$, subgroups of $A$ of the form $A\cap g(\im(if)\wr (\Sigma_l\times \Sigma_m))g^{-1}$ are proper. Thus it suffices to prove that $f$ factors through subgroups of the form $g(\im(if)\wr (\Sigma_l\times \Sigma_m))g^{-1}$. This follows from the fact that $\im(f) \cong \im(if)$ is central in $\im(if) \wr \Sigma_{p^j}$ when viewed as a subgroup through the diagonal embedding. Therefore $\im(if)$ is a subgroup of $g(\im(if)\wr (\Sigma_l\times \Sigma_m))g^{-1}$ for any choice of $g$.

We may conclude that the map $E^0(B\im(i f) \wr \Sigma_{p^j}) \to E^0(BA)$ induces a map of $E^0$-algebras
\[
E^0(B\im (i f) \wr \Sigma_{p^j})/I_{Tr}^{[if]} \to E^0(BA)/I_{\cF_f}.
\]
\end{proof}

There is also a canonical map
\[
\im \colon \Level_{f^*}(A^*, \G \oplus \qz') \to \Sub_{p^k}^{\im f^*}(\G \oplus \qz')
\]
given by sending a level structure $l \colon A^* \hookrightarrow \G \oplus \qz'$ such that the composite $A^* \hookrightarrow \G \oplus \qz' \to \qz'$ is equal to $f^*$ to the subgroup scheme $\im (l)$, a subgroup of $\G \oplus \qz'$ that projects onto $\im(f^*) \subset \qz'$.

\begin{prop}
The map of Proposition \ref{map} fits into a commutative diagram of formal schemes over $\Spf(E^0)$
\[
\xymatrix{\Spf(E^0(BA)/I_{\cF_f}) \ar[r] \ar[d] & \Spf(E^0(B\im(if) \wr \Sigma_{p^j})/I_{Tr}^{[if]}) \ar[d] \\ \Level_{f^*}(A^*, \G \oplus \qz') \ar[r]^{\im} & \Sub_{p^k}^{\im(f^*)}(\G \oplus \qz').}
\]
\end{prop}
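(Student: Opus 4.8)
The plan is to reduce the claim to the commutativity of a cube of formal schemes, exactly in the spirit of the proof of Proposition \ref{imdia}. The four corners of the square are the spectra of $E^0(BA)/I_{\cF_f}$, $E^0(B\im(if)\wr\Sigma_{p^j})/I_{Tr}^{[if]}$, $\Level_{f^*}(A^*,\G\oplus\qz')$, and $\Sub_{p^k}^{\im(f^*)}(\G\oplus\qz')$. Taking torsion-free parts (which corepresent the relevant level- and subgroup-schemes by Theorem \ref{fmain} and \cite[Proposition 7.12]{SSST}), it suffices to prove that the diagram obtained after applying $(-)^{\tors}$ commutes; and since the natural maps $E^0(B(-))\to(E^0(B(-))/I)^{\tors}$ appearing on two faces are surjective, it is enough to check commutativity of the "outer" square built from $E^0(BA)$, $E^0(B\im(if)\wr\Sigma_{p^j})$, $\mathcal{O}_{\Hom(A^*,\G\oplus\qz')}$, and $\mathcal{O}_{\Sub_{p^k}^{\im(f^*)}(\G\oplus\qz')}$, where the latter two come before passing to level/subgroup schemes.

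First I would set up the cube whose front face is the square to be proven and whose back face is the analogous square with $\Hom$ in place of $\Level$ and $\Sub$: the corners are $E^0(BA)$, $E^0(B\im(if)\wr\Sigma_{p^j})$, $\mathcal{O}_{\Hom(A^*,\G\oplus\qz')}$ (identified with $E^0(\Loops^hBA)$ restricted to the component $f$ via Proposition \ref{hkrcor}), and the ring of functions on the scheme of all homomorphisms $A^*\to\G\oplus\qz'$ sitting over $f^*$, which by \cite[Section 7]{SSST} maps to $\mathcal{O}_{\Sub_{p^k}^{\im(f^*)}(\G\oplus\qz')}$. The vertical cube maps are the quotient-by-transfer-ideal maps, which are surjective. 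The top and bottom faces of the cube commute by the constructions of the isomorphisms in Theorem \ref{fmain} and in \cite[Proposition 7.12]{SSST} respectively. The left face is the commutativity already recorded (or readily deduced) from Proposition \ref{map} together with the naturality used in Proposition \ref{imdia}. The right face is purely algebro-geometric: both composites send a homomorphism $A^*\to\G\oplus\qz'$ over $f^*$ to its image subgroup scheme, compatibly with the wreath-product/transitive-subgroup bookkeeping of \cite{SSST}. So the whole problem reduces to the commutativity of the back face.

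To prove the back face commutes, I would argue at the level of $R$-points for $R$ a complete local $E^0$-algebra. An $R$-point of $\Hom(A^*,\G\oplus\qz')$ lying over $f^*$ is a homomorphism $h\colon A^*\to\G(R)\oplus\qz'$ with $\pi h=f^*$; unwinding Proposition \ref{hkrcor}, this is exactly the datum of a point of $\Spf(E^0(BA))$ in the component indexed by $f$, i.e. a continuous $E^0$-algebra map $E^0(BA)\to R$. Going around the back face one way, restricting along $A\subseteq\im(if)\wr\Sigma_{p^j}$ and then applying the HKR-type identification of $\Spf(E^0(B\im(if)\wr\Sigma_{p^j}))$ with homomorphisms out of $(\im(if)\wr\Sigma_{p^j})^*$ (as in \cite[Section 7]{SSST}), one obtains the homomorphism $(\im(if)\wr\Sigma_{p^j})^*\to\G(R)\oplus\qz'$ pulled back from $h$ along the dual of the inclusion; going the other way, the identical homomorphism is produced by functoriality of $E^0(B(-))\cong\mathcal{O}_{\Hom((-)^*,\G\oplus\qz')}$ in maps of finite groups (Caley embedding then diagonal) — here one uses that $if$ is monotypical so that $C(\im if)\cong\im(if)\wr\Sigma_{p^j}$, and that the projection $(\im(if)\wr\Sigma_{p^j})^*\to\qz'$ component is governed by $if$, hence by $f$, matching the $\qz'$-components on both sides. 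Thus the two routes agree.

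The main obstacle I expect is the bookkeeping in the right face and in matching the $\qz'$-components across the two identifications: one must check that the image-subgroup construction $\im$ on $\Level_{f^*}$ is compatible with the subgroup-scheme construction of \cite[Proposition 7.12]{SSST} under the wreath-product identification $C(\im if)\cong\im(if)\wr\Sigma_{p^j}$, and in particular that the level structure $\ker(f^*)\hookrightarrow\G$ underlying a point of $\Level_{f^*}(A^*,\G\oplus\qz')$ produces exactly the subgroup of $\G\oplus\qz'$ that \cite{SSST} assigns to the corresponding point of $\Sub_{p^k}^{\im(f^*)}(\G\oplus\qz')$, i.e. a subgroup projecting onto $\im(f^*)$ with "formal part" $\im(\ker(f^*)\hookrightarrow\G)$. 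This is essentially the content of the decomposition $\Sub_{p^k}^{\im(f^*)}(\G\oplus\qz')\cong\Sub_{p^{k-j}}(\G)\times\{\text{splitting data}\}$ implicit in \cite[Section 6–7]{SSST}, combined with the non-canonical splitting $\Level_{f^*}(A^*,\G\oplus\qz')\cong\Level(M,\G)\times\Hom(K,\G)$ of Lemma \ref{pblevelfm}; once these are lined up, commutativity is forced because every map in sight is determined on the dense subscheme of $\GL_n(\Z_p)$-invariant $C_0$-points, and there all four rings embed into products of copies of $C_0$ where the maps are given by honest pullback of homomorphisms.
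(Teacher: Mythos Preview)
Your proposal contains a genuine gap. You try to mimic the cube argument of Proposition \ref{imdia}, using surjective vertical maps coming from the quotient by transfer ideals, and you want the back face to involve $E^0(B\im(if)\wr\Sigma_{p^j})$ together with an ``HKR-type identification of $\Spf(E^0(B\im(if)\wr\Sigma_{p^j}))$ with homomorphisms out of $(\im(if)\wr\Sigma_{p^j})^*$.'' But $\im(if)\wr\Sigma_{p^j}$ is not abelian (except in the degenerate case $p^j=1$), so it has no Pontryagin dual and there is no isomorphism of the form $E^0(BG)\cong\mathcal{O}_{\Hom(G^*,\G)}$ available on that corner. The whole point of the surjection trick in Proposition \ref{imdia} is that both groups are abelian, so the back face lives entirely in the world where Proposition \ref{HKR512} applies. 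That fails here, and with it your reduction to the back face collapses: you have no algebro-geometric description of $\Spf E^0(B\im(if)\wr\Sigma_{p^j})$ before taking the quotient by $I_{Tr}^{[if]}$.

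The paper's proof avoids this by building the cube in the opposite direction: the vertical maps are \emph{injections}, obtained by base change to $C_0$ along the HKR character map, and the bottom face consists of products of copies of $C_0$ indexed by $\Level_{f^*}(A^*,\qz\oplus\qz')$ and $\Sub_{p^k}^{\im(f^*)}(\qz\oplus\qz')$. In that constant-\'etale world both the abelian and the wreath-product corners have explicit set-theoretic descriptions (for the wreath product this is Strickland's theorem base-changed to $C_0$), and the bottom square is checked directly. Injectivity of the vertical maps then forces commutativity of the top square. You gesture at exactly this in your final sentence about $C_0$-points, and that sentence is the correct argument; but it is not a consequence of the cube you set up, it is a replacement for it.
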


\begin{proof}

Consider the following cube:
$$\xymatrix{& E^0(B(\im (if)\wr\Sigma_{p_j}))/ I_{Tr}^{[if]}\ar[rr]^{\cong}\ar'[d][dd]\ar[dl]& & \mathcal{O}_{\Sub_{p^k}^{\im(f^*)}(\G\oplus\qz')} \ar[dd]\ar[dl]\\ (E^0(BA)/I_{\cF_{f}})^{\tors} \ar[rr]^>>>>>>>>>>>>>>>{\cong}\ar[dd]& & \mathcal{O}_{\Level_{f^*}(A^*, \G\oplus\qz')} \ar[dd]&\\& Cl_n(\im(if)\wr\Sigma_{p^j}, C_0)/I_{Tr}^{[if]} \ar'[r][rr]^>>>>>>>>>>>>>>>{\cong}\ar[dl]& &  \prod\limits_{\Sub_{p^k}^{\im(f^*)}(\qz \oplus \qz')}C_0\ar[dl]\\ Cl_n(A, C_0)/I_{\cF_{f}} \ar[rr]^{\cong}& & \prod\limits_{\Level_{f^*}(A^*, \qz\oplus\qz')}C_0. & }$$
We wish to show that the top square commutes. All of the vertical maps in the diagram are injections and are given by tensoring with $C_0$ over $E^0$. The left square commutes by Hopkins--Kuhn--Ravenel character theory. The right square commutes as $C_0 \otimes \G \cong \qz$. The back square is built by base change applied to the isomorphism of \cite{SSST} and the front square is built by base change applied to the isomorphism of Theorem \ref{fmain}. Since the vertical maps are injective and each of the other squares commute, the top square commutes.
\end{proof}

\bibliographystyle{amsalpha}
\bibliography{mybib}

\end{document}